\newtheorem{thm}{Theorem}[section]
\newtheorem{lem}{Lemma}[section]
\newtheorem{cor}{Corollary}[section]
\newtheorem{rem}{Remark}[section]
\newcommand{\tmop}[1]{\ensuremath{\operatorname{#1}}}
\newcommand{\capd}{\prescript{C}{0}D_{t}^{\alpha}}
\newcommand{\veps}{\varepsilon}
\newcommand{\bv}{{\mathbf{v}}}
\newcommand{\bff}{{\mathbf{f}}}
\begin{document}

\title{On energy dissipation theory and numerical stability
  for time-fractional phase field equations\thanks{This work is partially
    supported by the NNSF of
    China (under grant numbers 11688101, 11771439, 91530322,
    91630312, 91630203, 11571351, and 11731006), {China
      National Program on Key Basic Research Project
      2015CB856003}, the science challenge project
    (No. TZ2018001), NCMIS, and the youth innovation
    promotion association (CAS).}}

\author{Tao Tang\thanks{	
	Division of Science and Technology, BNU-HKBU United International College,
	Zhuhai, Guangdong, China,
    and Shenzhen International Center for Mathematics, Southern
    University of Science and Technology, Shenzhen, China.
    Email: tangt@sustech.edu.cn.}
  \and Haijun Yu\thanks{NCMIS \& LSEC, Institute of
    Computational Mathematics and Scientific/Engineering
    Computing, Academy of Mathematics and Systems Science,
    Chinese Academy of Sciences, Beijing, 100190 China; {School of
      Mathematical Sciences, University of Chinese Academy
      of Sciences, Beijing, China.} Email:
    hyu@lsec.cc.ac.cn.}
  \and Tao Zhou\thanks{NCMIS \& LSEC, Institute of Computational
    Mathematics and Scientific/Engineering Computing,
    Academy of Mathematics and Systems Science, Chinese
    Academy of Sciences, Beijing, 100190 China. Email:
    tzhou@lsec.cc.ac.cn.}  }

\maketitle

\begin{abstract}
  For the time-fractional phase field models, the corresponding energy
  dissipation law has not been well studied on both the continuous and
  the discrete levels.  In this work, we shall address this open
  issue. More precisely, we prove for the first time that the
  time-fractional phase field models indeed admit an energy dissipation
  law of an integral type. In the discrete level, we propose a class of
  finite difference schemes that can inherit the theoretical energy
  stability. Our discussion covers the time-fractional Allen-Cahn
  equation, the time-fractional Cahn-Hilliard equation, and the
  time-fractional molecular beam epitaxy model. Several numerical
  experiments are carried out to verify the theoretical predictions.  In
  particular, it is observed numerically for both the time-fractional
  Cahn-Hilliard equation and the time-fractional molecular beam epitaxy
  model, there exist a coarsening stage that the energy dissipation rate
  satisfies a power law {scaling} with an asymptotic power $-\alpha/3$,
  where $\alpha$ is the fractional parameter.
\end{abstract}

\begin{keywords}
  time-fractional phase field equations,
  the Allen-Cahn equation, the Cahn-Hilliard equation, the MBE model,
  energy dissipation law, {energy stable scheme,} {maximum principle}
\end{keywords}

\begin{AMS}
  65M12, 65M06, 35Q99, 74A50
\end{AMS}

\section{Introduction}

The phase-field method has been a powerful modeling and simulation tool
in diverse research areas such as material sciences
\cite{AllenCahn_1979, CH, MBE,Li_Liu_2003}, multi-phase flow
\cite{BoschKS_CiCP2018, LowengrubT_1998, Ma_etal_CiCP2017,Anderson_1998,
  qian_molecular_2003, shen_efficient_2015, xu_sharp-interface_2018},
biology and tumor growth \cite{du_phase_2004, Hawkins_tumor_2012,
  Li_tumor_2007,Wise_tumor_2008}, to name a few. Most of the phase field
formulations are based on a free energy function depending on an order
parameter (the phase field) and a diffusive mechanism.  The well-known
examples of phase field models include the Allen-Cahn (AC) equation
\cite{AllenCahn_1979}, the Cahn-Hilliard (CH) equation \cite{CH}, and
the molecular bean epitaxy (MBE) model \cite{MBE,Li_Liu_2003}. A common
feature of the above mentioned phase field models is that their
corresponding free energy admits a dissipation law.

Taking the CH equation as an example, the associated governing equation
yields
\begin{equation}
  \label{eq:Cahn-Hilliard}
  \begin{cases}
    \frac{\partial \phi}{\partial t} + \gamma(- \Delta)
    \left( - \varepsilon \Delta \phi + \frac1\veps F'(\phi)
    \right ) = 0, \quad x \in \Omega \subset \mathbb{R}^d,\ d=2,3,
    \quad 0 < t \leq T, \\
    \phi(x,0)= \phi_0(x), \\
  \end{cases}
\end{equation}
where $\varepsilon$ is an interface width parameter, $\gamma$ is the
mobility, and $F$ is {a double-well potential that is usually} taken the
form
\begin{equation}
  \label{eq:doublewell}
  F(\phi) = \frac{1}{4}(1 -\phi^2)^2.
\end{equation}
For simplicity, we set $\Omega=(0, 2\pi)^d$, and assume that
$\phi(\cdot,t)$ satisfies a periodic boundary condition. The
corresponding free energy functional for the CH equation is defined as
\begin{equation}
  \label{eq:free-energy}
  E(\phi)
  := \int_{\Omega} \Bigl(\frac{\varepsilon}2 |\nabla \phi|^2
  + \frac1\veps F(\phi)\Bigr)  dx.
\end{equation}
The CH equation can be viewed as a gradient flow with the energy
(\ref{eq:free-energy}) in $H^{-1}$.  It is well known that the energy
functional $E$ decreases in time:
\begin{align}
  \label{eq:energy-law_II}
  \frac{d}{dt} E(\phi) = 
  -\int_\Omega \left|\nabla \left( - \varepsilon^2 \Delta \phi
  + F'(\phi) \right)\right|^2 dx \leq 0.
\end{align}
Such an energy dissipation property plays an important role in
developing stable numerical methods for dissipation systems due to its
importance for long time simulations, see e.g.,
\cite{Diegel.etal2017,du_numerical_1991, elliott_global_1993, Eyre_1998,
  Gomez_JCP_2011, Guo.etal2016,
  ShenY_DCDS_2010,wang_efficient_2018,XuT_SINUM_2006, Yan_CiCP_2018,
  YuJZ_CiCP2010} and references therein.

In recent years, fractional-type phase-field models have attracted more
and more attentions \cite{fractional_Mao,fractional_III, fractional_I,
  fractional_Wang_II, fractional_Wang_I, fractional_II,
  fractional_Xu}. For instance, the following fractional type free
energy is investigated in \cite{fractional_Xu}
\begin{equation}
  \label{eq:fractional-free-energy}
  E^\alpha(\phi)
  :=
  \int_{\Omega} \Bigl(\frac{\varepsilon^2}{2} |\nabla^\alpha \phi|^2 + F(\phi)\Bigr) dx,
\end{equation}
where $\nabla^\alpha$ is the fractional gradient
$\nabla^\alpha=(\frac{\partial^\alpha}{\partial x_1},
... ,\frac{\partial^\alpha}{\partial x_d})$ with
$\{\frac{\partial^\alpha}{\partial x_k}\}_k$ being the fractional
derivatives. One is then interested in the following space-fractional CH
equation
\begin{equation}
  \label{eq:Fractional_Cahn_Hilliard_I}
  \frac{\partial \phi}{\partial t} +
  (-\Delta) \left( - \varepsilon^2 \Delta^\alpha \phi + F'(\phi) \right )
  = 0.
\end{equation}
It is obvious that for the modified energy functional
(\ref{eq:fractional-free-energy}), the corresponding energy dissipation
law is
\begin{align}
  \label{eq:energy-law_III}
  \frac{d}{dt} E^\alpha(\phi) \leq 0.
\end{align}
Another interesting approach is to keep the original free energy
(\ref{eq:free-energy}) unchanged, but the associate gradient flow is
considered in $H^{-\alpha}$. This yields the following space-fractional
CH equation \cite{fractional_Mao}
\begin{equation}
  \label{eq:Fractional_Cahn_Hilliard_II}
  \frac{\partial \phi}{\partial t}
  + (- \Delta)^\alpha \left( - \varepsilon^2 \Delta \phi + F'(\phi) \right )
  = 0.
\end{equation}
It is straightforward to verify that its corresponding free energy
admits a dissipation law. As reported in \cite{fractional_Mao}, the
nature of the solution for the fractional CH is qualitatively close to
the behavior of the classical CH equation (\ref{eq:Cahn-Hilliard})
regardless of the size of the parameter $\alpha$.

The time-fractional phase field models have also been investigated
recently. Consider the following time-fractional CH equation
\begin{equation}
  \label{eq:Time-CH}
  \frac{\partial^{\alpha}}{\partial t^{\alpha}} \phi +
  \gamma (- \Delta)\Bigl( - \varepsilon \Delta \phi + \frac1\veps F'(u) \Bigr)
  = 0,
\end{equation}
where $\alpha\in(0,1),$ and $\frac{\partial^{\alpha}}{\partial t^{\alpha}}$
is the Caputo derivative defined as
\begin{equation}
  \frac{\partial^{\alpha}}{\partial t^{\alpha}}\phi
  = \capd\phi(t)
  :=
  \frac{1}{\Gamma(1-\alpha)}\int_{0}^{t}\frac{\phi'(s)}{(t-s)^{\alpha}}ds,
  \quad t>0, \quad\alpha\in(0,1).
  \label{eq:fracCaputo}
\end{equation}
In \cite{fractional_Wang_I}, it is shown numerically that the free
energy admits an energy dissipation law. However, rigorous analysis for
this observed behavior is still open, which is the main motivation of
the present work. Our main contribution is three folds:

\begin{itemize}

\item In the continuous level, we establish the energy dissipation law
  for the time-fractional AC equation, the time-fractional CH equation,
  and the time-fractional MBE model.

\item In the discrete level, we propose a class of finite difference
  schemes satisfying the discrete energy dissipation law for the
  time-fractional problems.

\item We also investigate the coarsening rate of the time-fractional
  phase-field models, and an asymptotic value for a power law is
  obtained.
\end{itemize}

The rest of the paper is organized as following. In Section 2, we shall
establish the energy dissipation law for the time-fractional phase-field
equations. In Section 3, a class of finite difference schemes will be
proposed, whose numerical solutions are shown to satisfy the energy
dissipation property. In Section 4, we shall discuss maximum principle
for the time-fractional AC equation. Numerical examples will be
presented in Section 5 to verify our theoretical results and to predict
an asymptotic power law. We finally give some concluding remarks in
Section 6.

\medskip

\section{Energy dissipation for time-fractional phase field equations}

We first introduce some notations and basic properties for fractional
calculus, see, e.g., \cite{kilbas_theory_2006,podlubny_fractional_1998}.
The Riemann-Liouville fractional integrals for $\alpha\in(0,1)$ on
finite interval $[0,T]$ are defined as
\begin{align*}
  (I_{0+}^{\alpha}f)(t)
  &:=
    \frac{1}{\Gamma(\alpha)}\int_{0}^{t}\frac{f(s)}{(t-s)^{1-\alpha}}ds,
    \quad\mbox{for}\ t\geq0,\\
  (I_{T-}^{\alpha}f)(t)
  &:=
    \frac{1}{\Gamma(\alpha)}\int_{t}^{T}\frac{f(s)}{(s-t)^{1-\alpha}}ds,
    \quad\mbox{for}\ t\leq T.
\end{align*}
The Liouville fractional integrals on the real axis $\mathbb{R}$ are
defined as:
\[
  (I_{+}^{\alpha}f)(t) :=
  \frac{1}{\Gamma(\alpha)}\int_{-\infty}^{t}\frac{f(s)ds}{(t-s)^{1-\alpha}},
  \quad \left(I_{-}^{\alpha}f\right)(t) :=
  \frac{1}{\Gamma(\alpha)}\int_{t}^{\infty}\frac{f(s)ds}{(s-t)^{1-\alpha}}.
\]
For $f\in L^p(0,T), g\in L^q(0,T)$, $p\ge 1, q\ge 1$ and
$\tfrac{1}{p}+\tfrac{1}{q} \le 1+\alpha$ ($p\neq 1$ and $q\neq 1 $ when
$\tfrac{1}{p}+\tfrac{1}{q} = 1+\alpha$), the following integration formulas
holds \cite[Lemma 2.7 on page 76]{kilbas_theory_2006}:
\begin{align}
  &\int_{0}^{T}(I_{0+}^{\alpha}f)(t)g(t)dt
  =
    \int_{0}^{T}f(t)(I_{T-}^{\alpha}g)(t)dt.
    \label{eq:kp2}
\end{align}
Moreover, for $f\in L^p(\mathbb{R}), g\in L^q(\mathbb{R})$, $p, q>1$ and
$\tfrac{1}{p}+\tfrac{1}{q}= 1+\alpha$, we have \cite[(2.3.22) on page
89]{kilbas_theory_2006}
\begin{align}
  &\int_{-\infty}^{\infty}(I_{+}^{\alpha}f)(t)g(t)dt
    =
    \int_{-\infty}^{\infty}f(t)(I_{-}^{\alpha}g)(t)dt.
    \label{eq:LintFracIntByPart}
\end{align}
Furthermore, for $\alpha,\beta>0, \, \alpha+\beta<1/p,$ and
$f\in L^{p}(\mathbb{R}),$ the following semi-group properties for
fractional integrals hold \cite[Lemma 2.19 on page
89]{kilbas_theory_2006}:
\begin{equation}
  (I_{+}^{\alpha}I_{+}^{\beta}f)(t) = (I_{+}^{\alpha+\beta}f)(t),
  \quad
  (I_{-}^{\alpha}I_{-}^{\beta}f)(t) = (I_{-}^{\alpha+\beta}f)(t).
  \label{eq:Lintsemig}
\end{equation}
The Fourier transform for the Liouville factional integrals are given by
\cite[Property 2.15 on page 90]{kilbas_theory_2006}:
\begin{equation}
  (\mathcal{F}I_{+}^{\alpha}f)(\xi) =
  \frac{(\mathcal{F}f)(\xi)}{(-i\xi)^{\alpha}}, \quad
  (\mathcal{F}I_{-}^{\alpha}f)(\xi) =
  \frac{(\mathcal{F}f)(\xi)}{(i\xi)^{\alpha}}, \quad
  \alpha\in(0,1),  \,\, f\in L^{1}(\mathbb{R}),
  \label{eq:LfracFourier}
\end{equation}
where
\[
  (\mp i\xi)^\alpha =|\xi|^{\alpha}e^{\mp i\alpha\pi \mbox{sgn}
    (\xi)/2}.
\]
For $0<\alpha<1$ and $1<p<1/\alpha$, the operator $I_{0+}^\alpha$ and
$I_{T-}^\alpha$ satisfy the following estimates, which is known as the
Hardy-Littlewood theorem \cite[Lemma 2.1.b on page
72]{kilbas_theory_2006}:
\begin{equation}
  \|I_{0+}^\alpha f\|_{L^q(0,T)} \le K \| f\|_{L^p(0,T)},
  \qquad
  \|I_{T-}^\alpha f\|_{L^q(0,T)} \le K \| f\|_{L^p(0,T)},
  \label{eq:H-Ltheorem}
\end{equation}
where $q=p/(1-\alpha p)$, and $K$ is a constant independent of $f$.

Next, we present the following lemma that plays an important role in our
analysis.

\medskip

\begin{lem}
  \label{lem:pdkernel}
  For any given $h\in L^{p}(0,T)$, $p\ge\frac{2}{1+\alpha}$ with
  $\alpha\in(0,1)$, define
  \begin{equation*}
    I_\alpha(h,g) := \frac{1}{\Gamma(\alpha)}\int_{0}^{T}\!\!\int_{0}^{t}\frac{h(s)g(t)}{(t-s)^{1-\alpha}}dsdt.
  \end{equation*}
  Then the following estimates hold:
  \begin{equation}
    I_\alpha(h,h)
    =\int_0^T\! I_{0+}^\alpha h(t) h(t) dt
    \geq \cos\frac{\alpha\pi}{2}\| I_{0+}^{\alpha/2} h \|^2_{L^2(0,T)}
    \geq 0.
    \label{eq:pdkernel}
  \end{equation}
\end{lem}

\smallskip

We briefly outline the proof of the above lemma. It is known that the
kernel $1/t^\alpha$ with $\alpha\in (0,1)$ is positive (see, e.g.,
\cite{mclean_convergence_2009, Mustapha.Schotzau2014,Nohel.Shea1976}),
and this can be verified by using either the Laplace transform or the
Fourier transform. For example, Nohel and Shea \cite{Nohel.Shea1976}
present a proof by checking the kernel function's Laplace transform,
while the property {$I_\alpha(h,h)\ge 0$} is established provided that
$h\in C(0, T)$. Here we can extend this result to the space of
$h\in L^p(0,T)$ with $p\ge\frac{2}{1+\alpha}$.  In this case,
\eqref{eq:pdkernel} can be shown {by} using the Fourier transform
technique. To show this, one can perform the zero extension for $h$ from
$[0,T]$ to $(-\infty,\infty)$, apply the semi-group property
\eqref{eq:Lintsemig}, and use the integration formula
\eqref{eq:LintFracIntByPart} to rewrite the integrand of the outer
integration as $(I_{+}^{\alpha/2}h)(I_{-}^{\alpha/2}h)$.  Hence, the
desired result (\ref{eq:pdkernel}) follows by combining the Fourier
transform \eqref{eq:LfracFourier}, the convolution theorem and the
Parseval's theorem with Hardy-Littlewood inequality
\eqref{eq:H-Ltheorem}.

\medskip

Notice that $h$ in Lemma \ref{lem:pdkernel} is assumed to take values in
$\mathbb{R}$. Nevertheless, when $h$ takes values in some Hilbert space,
the result can be extended by using the orthonormal bases expansion
argument, see, e.g., \cite{Mustapha.Schotzau2014}.

A direct extension of Lemma \ref{lem:pdkernel} leads to the following
corollary.

\smallskip

\begin{cor}
  \label{cor:pdkernel}
  For any given $h,g\in L^{p}(0,T)$ with $p\ge\frac{2}{2-\alpha}$, define
  \begin{equation}\label{eq:Aalpha}
    A_\alpha(h,g) := I_{1-\alpha}(h,g) =
    \frac{1}{\Gamma(1-\alpha)}
    \int_{0}^{T}\int_{0}^{t}\frac{h(s)g(t)}{(t-s)^{\alpha}}ds dt.
  \end{equation}
  Then the following estimates hold:
  \begin{align}
	\label{eq:pdkernelRiesz}
	A_\alpha(h,h)
    &= \frac{1}{2}\frac{1}{\Gamma(1-\alpha)}
      \int_{0}^{T}\int_{0}^{T}\frac{h(s)h(t)}{|t-s|^{\alpha}}ds dt \nonumber \\
    &\geq \sin\frac{\alpha\pi}{2}\| I_{0+}^{(1-\alpha)/2} h \|^2_{L^2(0,T)}
      \geq 0.
  \end{align}
\end{cor}

\subsection{The time-fractional Allen-Cahn equation}

Consider the following time-fractional AC equation:
\begin{equation}
  \label{eq:fracAC}
  \frac{\partial^{\alpha}}{\partial t^{\alpha}}\phi
  = \gamma\bigl(\veps\Delta\phi-\frac{1}{\veps}F'(\phi)\bigr),
  \quad \alpha \in (0,1),
  \quad (x,t) \in \Omega \times [0,T]
\end{equation}
with the homogeneous boundary condition
\begin{equation}
  \label{eq:fracAC_BC}
  \phi(x,t) = 0,
  \quad
  (x,t) \in \partial \Omega \times[0,T],
\end{equation}
where $\varepsilon$ is the thickness of the phase interface, $\gamma$ is
a mobility constant. Here we assume that $F$ admits the following
property: $F\in C^2(R)$ and there exist two constants $M_1< 0 < M_2$
such that
\begin{equation}
  \label{eq:pot_coercive}
  F'(M_1) = F'(M_2) =0; \,\,  F'(u) > 0,\ \forall\, u>M_2\;\; \mbox{and}\ F'(u)<0, \; \forall \, u<M_1.
\end{equation}
\begin{rem}
  The quartic growth double-well potential \eqref{eq:doublewell}
  satisfies this property with $M_1=-1$ and $M_2=1$.
\end{rem}

Notice that the fractional order $\alpha\in (0,1)$ in \eqref{eq:fracAC},
and when $\alpha=1$, \eqref{eq:fracAC}-\eqref{eq:fracAC_BC} is the
standard AC equation which satisfies a well-known energy dissipation
property
\begin{equation}
  \label{eq:ACedissDiffType}
  \frac{d}{dt}E[\phi]
  = -\frac{1}{\gamma} \Bigl{\|} \frac{\partial}{\partial t}  \phi\Bigr{\|} ^{2},
\end{equation}
or
\begin{equation}
  \label{eq:ACedissIntType}
  E[\phi(T)] - E[\phi(0)]
  = -\frac{1}{\gamma}\int_0^T \Bigl{\|} \frac{\partial}{\partial t}  \phi\Bigr{\|} ^{2} dt,
\end{equation}
where $E[\phi]$ is the system energy:
\begin{equation}\label{eq:E_ACcontinuous}
  E\left[\phi\right]
  = \frac{\veps}{2} \|\nabla\phi\|^{2}
  + \frac1\veps\bigl\langle F(\phi),1 \bigr\rangle.
\end{equation}
Here we use $\left\langle \cdot,\cdot\right\rangle $ to denote the
$L^{2}(\Omega)$ inner product in the spatial domain; and we denote by
$\left\Vert \cdot\right\Vert $ the standard $L^{2}(\Omega)$ norm.

We first address the question whether the time-fractional AC equation
{still} satisfies an energy dissipation law similar to
(\ref{eq:ACedissDiffType}) or \eqref{eq:ACedissIntType}.

\medskip

\begin{thm}
  \label{thm:ACEdiss}
  Consider Eq. \eqref{eq:fracAC} with the homogeneous boundary condition
  \eqref{eq:fracAC_BC} (or homogeneous Neumann/periodic boundary
  condition), if the initial energy $E[\phi(0)]$ is finite, then the
  following energy holds:
  \begin{equation}
    \label{eq:FACEenergylaw}
    E\left[\phi(T)\right]-E\left[\phi(0)\right]
    = -\frac{1}{\gamma}\int_{\Omega}A_{\alpha}(\phi_{t},\phi_{t})dx\leq0.
  \end{equation}
\end{thm}

\begin{proof}
  Multiplying both sides of (\ref{eq:fracAC}) by -$\phi_{t}$ and taking
  integration on the resulting equation yield
  \[
    -\int_{0}^{T}\int_{\Omega}\frac{\partial^{\alpha}\phi}{\partial
      t^{\alpha}}\phi_{t}dxdt
    =
    \gamma\int_{0}^{T}\int_{\Omega}(-\varepsilon\Delta\phi+
    \frac{1}{\varepsilon}F'(\phi))\phi_{t}dxdt
    =
    \gamma\int_{0}^{T}\frac{d}{dt}E[\phi]dt.
  \]
  Consequently,
  \begin{equation}
    \label{eq:ACEdiss0}
    E\left[\phi(T)\right]-E\left[\phi(0)\right]
    = -\frac{1}{\gamma \Gamma(1-\alpha)}
    \int_{\Omega}\int_{0}^{T}\int_{0}^{t}
    \frac{\phi_{t}(x,s)}{(t-s)^{\alpha}}ds\phi_{t}(x,t)dtdx.
  \end{equation}
  The combining (\ref{eq:ACEdiss0}) and Corollary \ref{cor:pdkernel} yields
  the desired property \eqref{eq:FACEenergylaw}.
\end{proof}
\medskip

\begin{rem} {\rm Notice that Eq. \eqref{eq:FACEenergylaw} is
    an energy dissipation law of integral type, by which
    we have
    \[
      E[\phi(T)]\le E[\phi(0)].
    \]
    Moreover, the dissipated energy
    $\frac{1}{\gamma}\int_{\Omega}A_{\alpha}(\phi_{t},\phi_{t})dx$ is
    also bounded by $E[\phi(0)]$, thus the solution is energy stable and
    $\| I_{0+}^{(1-\alpha)/2} \phi_t \|^2_{L^2(0,T)}$ is bounded.
    However, the conclusion \eqref{eq:FACEenergylaw}, in general, does
    not lead to $\frac{d}{dt} E\le 0$ or
    $\frac{d^\alpha}{dt^\alpha} E\le 0.$ }
\end{rem}

\subsection{The time-fractional Cahn-Hilliard equation}

The analysis of the time-fractional AC equation can be extended to the
time-fractional CH equation.

\smallskip
\begin{thm}
  \label{thm:CHEdiss} Consider the potential function $F$ described by
  \eqref{eq:pot_coercive} and total energy $E$ defined by
  \eqref{eq:E_ACcontinuous}. The time-fractional CH equation
  \begin{equation}
    \frac{\partial^{\alpha}}{\partial t^{\alpha}}\phi
    = \gamma\Delta\mu,\quad \mu
    =  -\veps\Delta\phi+\frac1\veps F'(\phi)
    \label{eq:fracCH}
  \end{equation}
  with periodic boundary conditions or no-flux boundary conditions
  \begin{equation}
    \frac{\partial\mu}{\partial n}\Big|_{\partial\Omega} = 0,
    \quad\frac{\partial\phi}{\partial n}\Big|_{\partial\Omega} = 0
    \label{eq:FCHEbc}
  \end{equation}
  satisfies the energy dissipation law
  \begin{equation}
    E\left[\phi(T)\right]-E\left[\phi(0)\right]
    =
    -\frac{1}{\gamma}\int_{\Omega}A_{\alpha}(\nabla\psi,\nabla\psi)dx\leq0,
    \label{eq:CHEdiss}
  \end{equation}
  where $\psi=-\Delta^{-1}\phi_{t}$ is the solution of the following
  equation
  \begin{equation}
    -\Delta\psi=\phi_{t}
    \label{eq:Laplace4CHE}
  \end{equation}
  with periodic or homogeneous Neumann boundary condition
  $\partial_{n}\psi\big|_{\partial\Omega}=0$.
\end{thm}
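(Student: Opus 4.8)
The plan is to mimic the proof of Theorem~\ref{thm:ACEdiss}, replacing the direct time-derivative pairing with an $H^{-1}$-type pairing that is natural for the Cahn--Hilliard flow. First I would multiply both sides of \eqref{eq:fracCH} by $-\mu$ and integrate over $\Omega\times[0,T]$. On the right-hand side, $\gamma\int_0^T\int_\Omega(-\Delta\mu)\mu\,dx\,dt$; since $\mu = -\veps\Delta\phi + \frac1\veps f(\phi)$ is exactly the variational derivative $\delta E/\delta\phi$, the spatial pairing $\langle \mu,\phi_t\rangle$ equals $\frac{d}{dt}E[\phi]$, so integrating in time collapses the right-hand side to $\gamma\bigl(E[\phi(T)]-E[\phi(0)]\bigr)$. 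This is the same chain-rule step as in the Allen--Cahn case; the periodic or no-flux boundary conditions \eqref{eq:FCHEbc} are needed to discard the boundary terms when integrating $-\Delta\mu$ against $\mu$ by parts.

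The key difference is the treatment of the left-hand side. Using \eqref{eq:fracCH} we have $\frac{\partial^\alpha}{\partial t^\alpha}\phi = \gamma\Delta\mu$, and I want to express the time-fractional contribution through the auxiliary potential $\psi$. Introducing $\psi = -\Delta^{-1}\phi_t$ as defined by \eqref{eq:Laplace4CHE}, I would rewrite the left-hand side pairing so that the fractional-in-time kernel acts on $\nabla\psi$ rather than on $\phi_t$ directly. Concretely, the identity $-\Delta\psi=\phi_t$ lets me replace one factor of $\phi_t$ inside the double time-integral by $-\Delta\psi$ and integrate by parts in space, converting $\langle \phi_t, \Delta^{-1}\phi_t\rangle$-type terms into $\langle \nabla\psi,\nabla\psi\rangle$. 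The goal is to arrive at the representation
\begin{equation}
  \label{eq:CHEproofgoal}
  E[\phi(T)]-E[\phi(0)]
  = -\frac{1}{\gamma}\int_\Omega A_\alpha(\nabla\psi,\nabla\psi)\,dx,
\end{equation}
after which nonnegativity of the right-hand integrand follows componentwise from Lemma~\ref{lem:pdkernel} applied to each spatial component of $\nabla\psi(x,\cdot)$ as a function of $t$, yielding $E[\phi(T)]\le E[\phi(0)]$.

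The main obstacle will be carefully juggling the Caputo fractional derivative against the spatial inverse Laplacian so that the bilinear form $A_\alpha$ emerges cleanly in the stated $\nabla\psi$ form. The subtlety is that $A_\alpha(\cdot,\cdot)$ is defined with the Caputo-type kernel acting on $\phi_t$, so I must verify that $\frac{\partial^\alpha}{\partial t^\alpha}\phi = \frac{1}{\Gamma(1-\alpha)}\int_0^t (t-s)^{-\alpha}\phi_t(s)\,ds$ can be paired with $\mu = -\veps\Delta\phi+\frac1\veps f(\phi)$ and then, via $\phi_t = -\Delta\psi$ and spatial integration by parts, rewritten as the kernel acting on $\nabla\psi$ tested against $\nabla\psi$. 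Care is needed because $\psi$ itself depends on $t$ through $\phi_t$, so the fractional kernel at times $s$ and $t$ couples $\psi(\cdot,s)$ and $\psi(\cdot,t)$; one must check that the commuting of the time-convolution with the time-independent operator $-\Delta^{-1}$ is legitimate and that the boundary terms vanish under \eqref{eq:FCHEbc}. Once this bookkeeping is settled, the conclusion is immediate from Lemma~\ref{lem:pdkernel}.
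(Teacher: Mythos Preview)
Your plan has the right endpoint but the wrong test function, and this is not just cosmetic. If you multiply $\partial_t^\alpha\phi=\gamma\Delta\mu$ by $-\mu$ and integrate, the right-hand side becomes $\gamma\int_0^T\|\nabla\mu\|^2\,dt$, not $\gamma\int_0^T\langle\mu,\phi_t\rangle\,dt$. In the integer-order Cahn--Hilliard flow those quantities coincide because $\phi_t=\gamma\Delta\mu$, but here only $\partial_t^\alpha\phi=\gamma\Delta\mu$, so $\langle\mu,\phi_t\rangle\neq -\gamma\|\nabla\mu\|^2$ in general. Consequently the right-hand side does not collapse to $E[\phi(T)]-E[\phi(0)]$, and the argument stalls before you ever get to introduce $\psi$.

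The paper avoids this by testing the first equation of \eqref{eq:fracCH} with $-\frac{1}{\gamma}\psi$ (not $-\mu$) and the second equation with $\phi_t$. Testing with $\psi$ gives $-\frac{1}{\gamma}\langle\partial_t^\alpha\phi,\psi\rangle=-\langle\Delta\mu,\psi\rangle=\langle\mu,\phi_t\rangle$ after two spatial integrations by parts and $-\Delta\psi=\phi_t$; the second pairing gives $\langle\mu,\phi_t\rangle=\frac{d}{dt}E$. Then one rewrites $\partial_t^\alpha\phi=I_{0+}^{1-\alpha}\phi_t=-I_{0+}^{1-\alpha}\Delta\psi$, so that $-\frac{1}{\gamma}\langle\partial_t^\alpha\phi,\psi\rangle=-\frac{1}{\gamma}\langle I_{0+}^{1-\alpha}\nabla\psi,\nabla\psi\rangle$, and integrates in time to reach \eqref{eq:CHEdiss}. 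Your later paragraphs about commuting $-\Delta^{-1}$ with the time-convolution and invoking Lemma~\ref{lem:pdkernel} componentwise are correct once this pairing is in place. One further point you omit: the paper first checks mass conservation $\int_\Omega\phi_t\,dx=0$ so that $\psi=-\Delta^{-1}\phi_t$ is actually well defined under periodic or Neumann conditions; you should include that step.
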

\smallskip

\begin{proof}
  We first show that the time-fractional CH equation conserves the total
  mass. More precisely, if $\phi$ is the solution of (\ref{eq:fracCH})
  with the periodic boundary condition or the no-flux boundary condition
  (\ref{eq:FCHEbc}), then
  \begin{equation}
    \int_{\Omega}\phi(x,t)dx =
    \int_{\Omega}\phi(x,0)dx,
    \qquad
    \forall \; t\geq 0.
    \label{eq:FCHEmassConserv}
  \end{equation}
  To see this, by integrating both sides of the first equation in
  (\ref{eq:fracCH}) in the physical domain one obtain
  \begin{align*}
    0
    &= \int_{\Omega}\frac{\partial^{\alpha}\phi}{\partial
      t^{\alpha}}dx
      = \frac{1}{\Gamma(1-\alpha)}
      \int_{\Omega}\int_{0}^{t}\frac{1}{(t-s)^{\alpha}}\frac{\partial\phi(x,s)}{\partial t}dsdx\\
    & =
      \frac{1}{\Gamma(1-\alpha)}\int_{0}^{t}\frac{1}{(t-s)^{\alpha}}
      \Bigl(\int_{\Omega}\frac{\partial}{\partial t}\phi(x,s)dx\Bigr)ds,
      \quad\forall\; t\geq 0.
  \end{align*}
  Then we obtain
  \[
    \frac{d}{dt}\int_{\Omega}\phi(x,t)dx =
    \int_{\Omega}\frac{\partial}{\partial t}\phi(x,t) = 0,
    \quad \mbox{for a.e.}\ t\geq0.
  \]
  Then (\ref{eq:FCHEmassConserv}) follows by integrating the above
  equation. It follows from (\ref{eq:FCHEmassConserv}) that
  $\int_{\Omega}\phi_{t}dx=0$. Thus, $\psi$ in (\ref{eq:Laplace4CHE}) is
  well defined. Pairing the first equation of \eqref{eq:fracCH} with
  $-\frac{1}{\gamma}\psi$, the second equation with $\phi_{t}$, and
  summing up the two resulting equations, we get
  \begin{equation} \label{eexx1}
    -\frac{1}{\gamma}\left\langle
      I_{0+}^{1-\alpha}\nabla\psi,\nabla\psi\right\rangle =
    \frac{d}{dt}E.
  \end{equation}
  Consequently, the energy dissipation law (\ref{eq:CHEdiss}) follows by
  integrating (\ref{eexx1}) from $0$ to $T$ and by using Lemma
  \ref{lem:pdkernel}.
\end{proof}

\subsection{The time-fractional MBE Model}

Now we consider the time-fractional MBE model:
\begin{equation}
  \frac{\partial^{\alpha}}{\partial t^{\alpha}}\phi
  = \gamma\big(-\veps\Delta^{2}\phi+\frac1\veps\nabla\cdot \bff_m(\nabla\phi)\big),
  \quad \alpha\in(0,1),
  \label{eq:TFMBE}
\end{equation}
where ${\bff}_m({\bv})= \partial F_m(\bv)/{\partial \bv}$ and
$F_m({\bv})$ is defined as (see, e.g., \cite{Li_Liu_2003})
\begin{equation}\label{eexx2}
  F_{m}({\bv})=
  \begin{cases}
    \frac{1}{4}\big(|{\bv}|^{2}-1\big)^{2}, \quad & \mbox{for model with slope selection},\\
    -\frac{1}{2}\ln\big|1+|{\bv}|^{2}\big|, & \mbox{for model without slope selection}.
  \end{cases}
\end{equation}
It can be verified that
${\bff}_m({\nabla\phi})=(|\nabla\phi|^{2}-1)\nabla\phi$ in the model
with slop selection, and
${\bff}_m({\nabla\phi})=-\frac{\nabla\phi}{1+|\nabla\phi|^{2}}$ in the
model without slop selection.

Using similar arguments as in the last subsection, we can show that the
time-fractional MBE model has the following energy dissipation property.

\smallskip
\begin{thm}
  \label{thm:TFMBEEnergyDiss}
  If the time-fractional MBE model satisfies periodic boundary condition
  or no-flux boundary condition
  \begin{equation}
    \partial_{n}\Delta\phi|_{\partial\Omega}=\partial_{n}\phi|_{\partial\Omega}=0,
    \label{eq:TFMBEbc}
  \end{equation}
  then the solution of (\ref{eq:TFMBE}) satisfies the energy dissipation
  law
  \begin{equation}
    E_{m}[\phi(T)]-E_{m}[\phi(0)]\leq-\frac{1}{\gamma}\int_{\Omega}A_{\alpha}(\phi,\phi)dx\leq0,
    \label{eq:TFCHEdisEnergyDiss-1-1}
  \end{equation}
  where
  \begin{equation} \label{eexx4}
    E_{m}(\phi) =
    \frac{\veps}2\|\Delta\phi\|^{2}+\frac1\veps\bigl\langle
    F_m(\nabla\phi),1\bigr\rangle.
  \end{equation}
\end{thm}

\section{Energy stable finite difference schemes}

In this section, we shall design energy stable finite difference schemes
for the time-fractional phase field models. To this end, we shall first
review a commonly-used finite difference scheme.

Let us first consider the following time-fractional diffusion equation
\begin{equation*}
  \frac{\partial^{\alpha}u}{\partial t^{\alpha}}=\Delta u+g(x,t),
  \quad\alpha\in(0,1),
\end{equation*}
which can be viewed as a linearized version of the time-fractional AC
equation (\ref{eq:fracAC}). For ease of notation, we consider the
one-dimensional case, i.e.,
$\Delta u=\frac{\partial^{2}u}{\partial x^{2}}$, with $x$ being the spatial
variable. Let $\tau$ be the time step size, $t_{k}=k\tau$, $u^{k}(\cdot)$
be the numerical approximation of $u(\cdot,t_{k})$.  By applying the
classical L1 scheme (see, e.g., \cite{liao_discrete_2019, lin_finite_2007})
to the time-fractional derivative and treating other terms in an implicit
way, one gets the following scheme:

\begin{equation}
  \sum_{j=0}^{k}b_{j}\frac{u^{k+1-j}(x)-u^{k-j}(x)}{\tau} =
  \frac{\partial^{2}u^{k+1}(x)}{\partial x^{2}}+g(x,t_{k+1}),
  \label{eq:fdDiff}
\end{equation}
where
\begin{equation}
  b_{j}
  = \frac{1}{\Gamma(1-\alpha)}\int_{j\pi}^{(j+1)\pi}\frac{1}{t^{\alpha}}dt
  = \frac{\tau^{1-\alpha}}{\Gamma(2-\alpha)}\left[(j+1)^{1-\alpha}-j^{1-\alpha}\right],
  \quad j\geq0.
  \label{eq:coefb}
\end{equation}

The derivation of the left hand side of (\ref{eq:fdDiff}) is given as
below
\begin{align*}
  &\frac{\partial^{\alpha}u}{\partial t^{\alpha}}(x,t_{k+1})
    =
    \frac{1}{\Gamma(1-\alpha)}\int_{0}^{t_{k+1}}\frac{u_{t}(x,s)}{(t_{k+1}-s)^{\alpha}}ds\\
  &= 
    \sum_{j=0}^{k}\frac{u(x,t_{j+1})-u(x,t_{j})}{\tau}\frac{1}{\Gamma(1-\alpha)}
    \int_{t_{j}}^{t_{j+1}}\frac{ds}{(t_{k+1}-s)^{\alpha}}+r_{\tau}^{k+1}\\
  &=
    \sum_{j=0}^{k}\frac{u(x,t_{j+1})-u(x,t_{j})}{\tau}
    \frac{\tau^{1-\alpha}}{\Gamma(2-\alpha)}
    [(k+1-j)^{1-\alpha}-(k-j)^{1-\alpha}]+r_{\tau}^{k+1}\\
  &=
    \sum_{j=0}^{k}b_{k-j}\frac{u(x,t_{j+1})-u(x,t_{j})}{\tau}+r_{\tau}^{k+1},
\end{align*}
where the integer derivative $\frac{\partial}{\partial t}u(x,t)$ in time
interval $[t_j, t_{j+1}]$ is approximated with a first order Euler
scheme \cite{sun_fully_2006}. The finite difference scheme for the
fractional differential operator is obtained by dropping the remainder
$r_{\tau}^{k+1}$.

Suppose the spatial discretization using Galerkin approach is accurate
enough. Then the $H^{1}$ stability is available as the L1 discretization
of the fractional derivative satisfies the special property:
\begin{equation}
  \sum_{j=0}^{k}b_{j}\frac{u^{k+1-j}(x)-u^{k-j}(x)}{\tau}
  =
  \frac{1}{\tau}\Bigl[b_{0}u^{k+1}-\sum_{j=0}^{k-1}(b_{j}-b_{j+1})u^{k-j}-b_{k}u^{0}\Bigr],
  \label{eq:bterm_equiv}
\end{equation}
and
\begin{equation}\label{eq:bprop}
  b_{k}>0,
  \quad  b_{k}-b_{k+1}>0,
  \quad  \sum_{j=0}^{k-1}(b_{j}-b_{j+1})_{}+b_{k}=b_{0},
  \quad  \forall\ k\geq 0.
\end{equation}
By this property, if one pair the scheme \eqref{eq:fdDiff} with
$u^{k+1}$, then all the cross terms $u^{j}u^{k+1}$ $(j=0,\ldots, k)$ can
be bounded by $\frac12 [(u^{j})^{2}+(u^{k+1})^{2}]$. Hence the $H^1$
stability can be proved by a simple mathematical induction
\cite{lin_finite_2007}.

Before providing rigorous nonlinear stability analysis, let us make an
assumption on the bulk potential function $F(\phi):$
$F(\phi) \in C^2 (\mathbb{R})$, and there exists a finite constant $L$
such that
\begin{equation}
  \label{eq:pot_Lip}
  \max_{u\in R}|F''(u)| \le L.
\end{equation}
\begin{rem}
  There are many ways to modify the potential $F(\phi)$ such that
  (\ref{eq:pot_Lip}) is satisfied. One possible way is to to lower the
  far-ends nonlinearity(\cite{caffarelli_l_1995, ShenY_DCDS_2010,
    condette_spectral_2011, wang_energy_2018}). E.g.  we consider the
  following double-well potential with quadratic growth:
    \begin{equation}
      \label{eq:doublewell_tr}
      {F}(\phi)=
      \begin{cases}
        \frac{11}{2}(\phi-2)^{2}+6(\phi-2)+\frac94, &\phi>2, \\
        \frac{1}{4 }(\phi^{2}-1)^{2}, &\phi\in [-2,2], \\
        \frac{11}{2}(\phi+2)^{2} - 6(\phi+2)+\frac94,
        &\phi<-2.
      \end{cases}
    \end{equation}
    It can be verified that the above potential satisfy
    (\ref{eq:pot_Lip}).
  \end{rem}

\medskip

\subsection{The time-fractional AC equation}

We now consider the time-fractional AC equation (\ref{eq:fracAC}). We
adopt the L1 scheme for the linear part of (\ref{eq:fracAC}) and use a
stabilization technique for the nonlinear bulk force.  This leads to the
following semi-discretized scheme for (\ref{eq:fracAC}):
\begin{align}
  &\frac{1}{\gamma}\sum_{j=0}^{k}b_{j}\frac{\phi^{k+1-j}(x)-\phi^{k-j}(x)}{\tau}
    \nonumber \\
  & \quad\quad ={} \veps\Delta\phi^{k+1}-\frac{1}{\veps}f(\phi^{k})
    -\frac{S}{\gamma}(\phi^{k+1}-\phi^{k}),
    \quad  k\geq0,
    \label{eq:ACFDstab}
\end{align}
where $f(\phi)=F'(u)$, $S$ is a sufficiently large positive constant,
$\tau=T/n$ is the time step size, and $\{b_{j}\}$ are defined by
(\ref{eq:coefb}).

To show the energy stability of the above numerical scheme, we first
present the following lemma.

\medskip
\begin{lem}
  \label{lem:dispd}
  For any
  $(u_{1},\ldots,u_{n})^{T}\in {\mathbb{R}}^{n}$, define
  \[
    B := 2\sum_{k=1}^{n}\sum_{j=1}^{k}b_{|k-j|}u_{j}u_{k}.
  \]
  Then we have
  \begin{align}
    B
    &= \sum_{k=1}^{n}b_{0}u_{k}^{2} +
      \sum_{k=1}^{n}\sum_{j=1}^{n}b_{|k-j|}u_{j}u_{k} \geq
      \sum_{k=1}^{n} {b_0} u_{k}^{2},
      \label{eq:lem31a} \\
     \label{eq:lem31b}
    B
    &\ge  \frac{2}{\tau}  \sin\frac{\alpha\pi}{2}\| I_{0+}^{(1-\alpha)/2} u^n(t)
      \|^2_{L^2(0,T)}
      + s_n\sum_{k=1}^{n}u_{k}^{2},
  \end{align}
  where
  $s_{n}=\bigl(\frac{n+1}2\bigr)^{-\alpha}\frac{1}{\Gamma(1-\alpha)}\tau^{1-\alpha}>0$
  and the piecewise constant function $u^n(t)$ is defined by
  \begin{equation}
    u^{n}(t)=
    \begin{cases}
      u_{\left\lfloor t/\tau\right\rfloor +1}, & 0\leq t<T,\\
      0, & \mbox{otherwise}.
    \end{cases}\label{eq:ugridfunc}
  \end{equation}
  In (\ref{eq:ugridfunc}), $\left\lfloor t\right\rfloor $ stands for the
  integer part of real number $t$.
\end{lem}
\medskip

\begin{proof}
  Note that Eq. \eqref{eq:lem31a} is well known (see, e.g.,
  \cite{Le.etal2016,Tang1993b}). We only need to prove the inequality
  \eqref{eq:lem31b}. We shall prove it by converting $B$ into the form
  of Corollary \ref{cor:pdkernel}.  First, convert
  $\left\{ u_{j},j=1,\ldots,n\ \right\} $ into a piecewise constant
  function $u^n(t)$ on $[0,T]$ as {in \eqref{eq:ugridfunc}.}  Obviously,
  $u^{n}(t)\in L^{2}(0,T)$.  Then by Lemma \ref{lem:pdkernel} or
  Corollary \ref{cor:pdkernel}, we have
  \begin{align*}
    0
    &\leq
      \frac{2}{\tau}A_{\alpha}(u^{n},u^{n})
      = \frac{1}{\tau}\frac{1}{\Gamma(1-\alpha)}
      \int_{0}^{T}\int_{0}^{T}  \frac{u^{n}(s)u^{n}(t)}{|t-s|^{\alpha}}dsdt\\
    &= 
      \frac{1}{\tau}\frac{1}{\Gamma(1-\alpha)}
      \sum_{k=1}^{n}\int_{(k-1)\tau}^{k\tau}
      u_{k}\int_{0}^{T}\frac{u^{n}(s)}{|t-s|^{\alpha}}dsdt\\
    &= 
      \frac{1}{\tau}\frac{1}{\Gamma(1-\alpha)}
      \sum_{k=1}^{n}u_{k}\sum_{j=1}^{n}u_{j}
      \int_{(k-1)\tau}^{k\tau}\int_{(j-1)\tau}^{j\tau}\frac{1}{|t-s|^{\alpha}}dsdt
      =
      \sum_{k=1}^{n}\sum_{j=1}^{n}u_{j}u_{k}\tilde{b}_{|k-j|},
  \end{align*}
  where
  \begin{align*}
    \tilde{b}_{|k|}
    &=
      \frac{1}{\Gamma(1-\alpha)}\frac{1}{\tau}
      \int_{k\tau}^{(k+1)\tau}\int_{0}^{\tau}\frac{1}{|t-s|^{\alpha}}dsdt\\
    &=
      \frac{\tau^{1-\alpha}}{\Gamma(3-\alpha)}
      \left((k+1)^{2-\alpha}-2k^{2-\alpha}+(k-1)^{2-\alpha}\right),
      \quad  k\geq1,\\
    \tilde{b}_{0}
    &=
      \frac{2}{\Gamma(2-\alpha)}\frac{1}{\tau}
      \int_{0}^{\tau}t^{1-\alpha}dt=\frac{2}{\Gamma(3-\alpha)}\tau^{1-\alpha}.
  \end{align*}
  It is easy to see that $b_{|k|}$ is an approximation of
  $\tilde{b}_{|k|}$ by evaluating the integration using a one side
  quadrature rule. To prove $B$ is positive definite, we need to prove
  that the difference between $B$ and $2A_\alpha/\tau$ in the
  off-diagonal parts can be controlled by the difference in the diagonal
  part. To show this, for the diagonal term we have
  \[
    2b_{0}-\tilde{b}_{0} =
    \frac{2\tau^{1-\alpha}}{\Gamma(2-\alpha)} -
    \frac{2\tau^{1-\alpha}}{\Gamma(3-\alpha)} =
    2b_{0}(1-\frac{1}{2-\alpha})=\frac{2}{2-\alpha}\frac{\tau^{1-\alpha}}{\Gamma(1-\alpha)}
    \ge 0.
  \]
  For the off-diagonal term, let
  $G(x):=\frac{1}{2-\alpha}(x+1)^{2-\alpha}-\frac{1}{2-\alpha}x{}^{2-\alpha}$,
  then we have
  \begin{align*}
    & \tilde{b}_{|k|}-b_{|k|}\\
    & =
      \frac{\tau^{1-\alpha}}{\Gamma(2-\alpha)}
      \Bigl(\frac{(k+1)^{2-\alpha}-2k^{2-\alpha}+(k-1)^{2-\alpha}}{2-\alpha}
      -\left[(k+1)^{1-\alpha}-k^{1-\alpha}\right]\Bigr)\\
    & =
      \frac{\tau^{1-\alpha}}{\Gamma(2-\alpha)}\left(G(k)-G(k-1)-G'(k)\right)\\
    & =
      \frac{\tau^{1-\alpha}}{\Gamma(2-\alpha)}
      \frac{1}{2}(1-\alpha)\left(x^{-\alpha}-(x+1)^{-\alpha}\right),
      \quad\mbox{for some } x\in[k-1,k]\\
    &\ge 0, \qquad \forall\ k\geq 1.
  \end{align*}
  Thus we obtain,
  \begin{align*}
    \sum_{k=1}^{m}\left|\tilde{b}_{|k|}-b_{|k|}\right|
    &=
      \frac{\tau^{1-\alpha}}{\Gamma(2\!-\!\alpha)}
      \Bigl(\frac{({m}+1)^{2-\alpha}\!-{m}^{2-\alpha}\!-1}{2-\alpha}
      -\left[({m}+1)^{1-\alpha}\!-1\right]\Bigr)\\
    &\leq
      \frac{\tau^{1-\alpha}}{\Gamma(1\!-\!\alpha)}
      \Bigl(\frac{1}{2\!-\!\alpha} - \frac12({m}+1)^{-\alpha}\Bigr),
      \quad {1\le m \le n-1}.
  \end{align*}
  A direct calculation shows that the column sum of the off-diagonals
  are bounded by
  \begin{equation*}
  	c_0:=\frac{\tau^{1-\alpha}}{\Gamma(1\!-\!\alpha)}
  	\Bigl(\frac{2}{2\!-\!\alpha} - \bigl(\frac{n+1}2\bigr)^{-\alpha}\Bigr).
  \end{equation*}
  Hence $C=\left\{ c_{k-j}\right\} _{k,j=1}^{n}$ with
  $c_{k}=b_{|k|}-\tilde{b}_{|k|}$ for $k=\pm1,\ldots,\pm n$ is a
  symmetric positive definite $M$-matrix.  We then have
  \begin{align*}
    B
    &=
      \frac{2}{\tau}A_{\alpha}(u^{n},u^{n})
      + \sum_{k=1}^{n}\left[2b_{0}-\tilde{b}_{0}-c_{0}\right]
      u_{k}^{2}+\sum_{k=1}^{n}\sum_{j=1}^{n}c_{k-j}u_{k}u_{j}\\
    &\geq
      \frac{2}{\tau}A_{\alpha}(u^{n},u^{n})
      + \sum_{k=1}^{n}\left[2b_{0}-\tilde{b}_{0}-c_{0}\right]u_{k}^{2}\\
    &=
      \frac{2}{\tau}A_{\alpha}(u^{n},u^{n})
      + s_n\sum_{k=1}^{n}u_{k}^{2}.
  \end{align*}
  The proof is complete.
\end{proof}
\medskip

We are now ready to give the following result indicating that the
proposed numerical scheme is energy stable.
\smallskip

\begin{thm}
  \label{thm:TFACEdisEnergyDiss}
  The numerical solution of (\ref{eq:ACFDstab}) with a modified bulk
  potential function (\ref{eq:doublewell_tr}) satisfies the following
  discrete energy dissipation law
  \begin{align}
    E[\phi^{n}]-E[\phi^{0}]
    &\leq 
      -\frac{{b_{0}}}{2\gamma\tau}\sum_{k=0}^{n-1}\|\delta_{t}\phi^{k+1}\|^{2}
      \nonumber\\
    &\qquad{}
      -\sum_{k=0}^{n-1}\left\{ \frac\veps2\|\nabla\delta_{t}\phi^{k+1}\|^{2}
      +\Bigl\langle \frac{S}{\gamma}-\frac{1}{2\veps}f'(\xi^{k}),
      (\delta_{t}\phi^{k+1})^{2}\Bigr\rangle
      \right\},
    \label{eq:TFACEdisEnergyDiss}
  \end{align}
  where $\delta_{t}\phi^{k+1}:=\phi^{k+1}-\phi^{k}$, providing that
  \begin{equation} \label{eq:fracAC_ESC}
  	S+ \frac{b_0}{2\tau} \geq \frac{\gamma L}{2\veps},
  \end{equation}
  where $L$ is given by (\ref{eq:pot_Lip}).  If $S\geq \gamma L/2\veps$,
  then the scheme is unconditional energy stable in the sense that
  \[
    E[\phi^{n}]\le E[\phi^{0}], \quad \forall\;\tau>0,\ n> 0.
  \]
\end{thm}

\begin{proof}
  Multiplying both sides of (\ref{eq:ACFDstab}) by
  $\delta_{t}\phi^{k+1}$, and integrating in space, the resulting
  right-hand side is given by
  \begin{align*}
    \tmop{RHS}
    &=
      -\frac\veps2\|\nabla\phi^{k+1}\|^{2}
      +\frac\veps2\|\nabla\phi^{k}\|^{2}
      -\frac\veps2\|\nabla\delta_{t}\phi^{k+1}\|^{2}
      -\Bigl\langle \frac1\veps f(\phi^{k})\delta_{t}\phi^{k+1}
      +\frac{S}{\gamma}(\delta_{t}\phi^{k+1})^{2},1\Bigr\rangle \\
    &=
      -\frac\veps2\|\nabla\phi^{k+1}\|^{2}
      +\frac\veps2\|\nabla\phi^{k}\|^{2}
      -\frac\veps2\|\nabla\delta_{t}\phi^{k+1}\|^{2} \\
    &\qquad { }
      -\Bigl\langle \frac1\veps F(\phi^{k+1})-\frac1\veps F(\phi^{k}) +
      \Bigl(\frac{S}{\gamma} -\frac{1}{2\veps}f'(\xi^{k})\Bigr)
      (\delta_{t}\phi^{k+1})^{2}, 1\Bigr\rangle,
  \end{align*}
  where $\xi^{k}(x)$ is between $\phi^{k}(x)$ and $\phi^{k+1}(x)$. On
  the other hand, the resulting left-hand side is given by
  \[
    \tmop{LHS} = \frac{1}{\gamma\tau}
    \int_{\Omega}\sum_{j=0}^{k}b_{j}\delta_{t}\phi^{k+1-j}\delta_{t}\phi^{k+1}dx.
  \]
  Summing up both sides for $k=0,\ldots,n-1$, we get
  \begin{align*}
  	E[\phi^{n}]-E[\phi^{0}] 
  	&=
     -\frac{1}{\gamma\tau}
     \int_{\Omega}\sum_{k=0}^{n-1}\sum_{j=0}^{k}b_{j}
     \delta_{t}\phi^{k+1-j}\delta_{t}\phi^{k+1} dx\\
    &\qquad {}
      -\sum_{k=0}^{n-1}\left\{ \frac\veps2 \|\nabla\delta_{t}\phi^{k+1}\|^{2}
      +\Bigl\langle
      \frac{S}{\gamma}-\frac{1}{2\veps}f'(\xi^{k}), (\delta_{t}\phi^{k+1})^{2}
      \Bigr\rangle  \right\} .
  \end{align*}
  The desired energy estimate (\ref{eq:TFACEdisEnergyDiss}) follows by
  using Lemma \ref{lem:dispd}.
\end{proof}

\medskip

\subsection{The time-fractional CH equation}

The scheme (\ref{eq:ACFDstab}) can be easily extended to the
time-fractional CH equation (\ref{eq:fracCH}) with the double-well
potential function of quadratic growth (\ref{eq:doublewell_tr}):
\begin{align}
  &
    \frac{1}{\gamma}\sum_{j=0}^{k}b_{j}\frac{\phi^{k+1-j}(x)-\phi^{k-j}(x)}{\tau}=\Delta\mu^{k+1},
    \label{eq:TFCHEBDF1}\\
  &
    \mu^{n+1}=-\veps\Delta\phi^{k+1}+\frac{1}{\veps}f(\phi^{k})+\frac{S}{\gamma}(\phi^{k+1}-\phi^{k}).
    \label{eq:TFCHEBDF1_2}
\end{align}

Similar to the time-fractional AC equation case, one can prove the
following energy dissipation property.

\smallskip

\begin{thm}
  \label{thm:TFCHEdisEnergyStab}
  Consider the numerical scheme
  (\ref{eq:TFCHEBDF1})-(\ref{eq:TFCHEBDF1_2}) with a modified bulk
  potential function (\ref{eq:doublewell_tr}). Then the numerical
  solution of (\ref{eq:TFCHEBDF1})-(\ref{eq:TFCHEBDF1_2}) satisfies the
  following discrete energy dissipation property:
  \begin{align}
    E[\phi^{n}]-E[\phi^{0}]\nonumber
	& \leq 
   -\frac{{b_{0}}}{2\gamma\tau}
   \sum_{k=0}^{n-1}\|\delta_{t}\phi^{k+1}\|_{H^{-1}}^{2}\\
    & \qquad {}
      -\sum_{k=0}^{n-1} \left\{
      \frac{\veps}{2}\|\nabla\delta_{t}\phi^{k+1}\|^{2}
      + \Bigl\langle
      \frac{S}{\gamma}-\frac{1}{2\veps}f'(\xi^{k}),(\delta_{t}\phi^{k+1})^{2}
      \Bigr\rangle
      \right\},
    \label{eq:TFCHEdisEnergyDiss}
  \end{align}
  providing that
  $\sqrt{\frac{b_0\varepsilon}{\gamma\tau}} +\frac{S}{\gamma} \ge
  \frac{L}{2\varepsilon}$. If $S\geq\frac{\gamma L}{2\veps}$, then the
  scheme is unconditional energy stable in the sense that
  \[
    E[\phi^{n}]\le E[\phi^{0}], \quad \forall\; {\tau>0,}\ n> 0.
  \]
\end{thm}


\subsection{The time-fractional MBE equation}
The scheme (\ref{eq:ACFDstab}) and the relevant analysis also apply to
the time-fractional MBE model (\ref{eq:TFMBE}) without slope selection,
in which we have
\begin{equation} \label{eexx8}
  \bff_m'(\bv) := \frac{\partial
    \bff_m(\bv)}{\partial \bv}=\frac{2\bv^2 -
    (|\bv|^2+1)I}{(1+|\bv|^2)^2},
\end{equation}
where $I$ is an identity matrix. The corresponding numerical scheme is
\begin{align}
  & \frac{1}{\gamma}\sum_{j=0}^{k}b_{j}\frac{\phi^{k+1-j}(x)-\phi^{k-j}(x)}{\tau}\nonumber\\
  &\qquad\quad = -\veps\Delta^{2}\phi^{k+1} + \frac{1}{\veps}\nabla\cdot \bff_m(\nabla\phi^{k})
    + \frac{S}{\gamma}(\Delta\phi^{k+1}-\Delta\phi^{k}),
    \quad k\geq0.
   \label{eq:TFMBEBDF1}
\end{align}
Similarly, one can prove the following energy dissipation property.

\medskip

\begin{thm}
  \label{thm:TFMBEdisEnergyStab}
  {Consider} the time-fractional MBE model (\ref{eq:TFMBE}) with
  periodic boundary condition or no-flux boundary condition
  (\ref{eq:TFMBEbc}).  Then the numerical scheme (\ref{eq:TFMBEBDF1})
  satisfies the following discrete energy law:
  \begin{align}
    \nonumber
    E_{m}[\phi^{n}]-E_{m}[\phi^{0}]
    &\leq
    - \frac{{b_{0}}}{2\gamma\tau}
      \sum_{k=0}^{n-1}\|\delta_{t}\phi^{k+1}\|^{2}
      -\sum_{k=0}^{n-1}
      \frac{\veps}{2}\|\nabla^{2}\delta_{t}\phi^{k+1}\|^{2} \\
    &\qquad\qquad\qquad
      -\sum_{k=0}^{n-1}
      \Bigl\langle  \frac{S}{\gamma} - \frac{1}{2\veps} \bff_m'(\xi^{k}),
      (\nabla\delta_{t}\phi^{k+1})^{2} \Bigr\rangle,
      \label{eq:TFCHEdisEnergyDiss-1}
  \end{align}
  providing that
  $\sqrt{\frac{b_0\varepsilon}{\gamma\tau}} +\frac{S}{\gamma} \ge
  \frac{1}{2\varepsilon}\lambda_\text{max}\big(\bff_m'(\xi^{k})\big)$. Here
  $\lambda_\text{max}\big(\bff_m'(\xi^{k})\big)$ means the largest
  eigenvalue of the matrix $\bff_m'(\xi^{k})$. In particular, if we
  choose $S\geq \frac{\gamma}{16\veps}$, then the numerical scheme
  (\ref{eq:TFMBEBDF1}) for the time-fractional MBE model without slope
  selection is unconditionally energy stable for any time step size.
\end{thm}
  \begin{proof}
    We remark that the inequality (\ref{eq:TFCHEdisEnergyDiss-1}) can be
    obtained by the standard energy method similar to those employed in
    the previous subsections. To make the last term in
    (\ref{eq:TFCHEdisEnergyDiss-1}) non-negative, one only needs that
    \[
      \frac{S}{\gamma} - \frac{1}{2\veps}\lambda_\text{max}\big(\bff_m'(\xi^{k})\big) \ge 0.
    \]
    A direct calculation using (\ref{eexx8}) shows that
    {$\lambda_\text{max}\big(\bff_m'(\xi^{k})\big) \le 1/8
      $}. Consequently, the above inequality holds provided that
    $S\geq \frac{\gamma}{16\veps}$.
  \end{proof}

\medskip

\begin{rem}
  One can also resort to the convex-splitting approach
  \cite{elliott_global_1993, Eyre_1998} to design energy stable
  schemes. Take the classical double well potential
  \eqref{eq:doublewell} as an example, let
	\begin{equation} \label{eq:fsplit} f_{i}(\phi) = \phi^3,\quad
      f_{e}(\phi) = \phi.
	\end{equation}
	For the time-fractional CH equation, the corresponding convex
    splitting scheme reads
	\begin{align}
      &
        \frac{1}{\gamma}\sum_{j=0}^{k}b_{j}\frac{\phi^{k+1-j}(x)-\phi^{k-j}(x)}{\tau}=\Delta\mu^{k+1},
        \label{eq:TFCHEBDF1cs}\\
      &
        \mu^{n+1}=-\veps\Delta\phi^{k+1}+\frac{1}{\veps}f_i(\phi^{k+1}) -\frac{1}{\veps}f_e(\phi^{k}).
        \label{eq:TFCHEBDF1_2cs}
	\end{align}
	For the time-fractional MBE model with slope selection, the
    corresponding scheme yields
	\begin{align}
      & \frac{1}{\gamma}\sum_{j=0}^{k}b_{j}\frac{\phi^{k+1-j}(x)-\phi^{k-j}(x)}{\tau}
        \nonumber\\
      &\qquad = -\veps\Delta^{2}\phi^{k+1}
        +\frac{1}{\veps}\nabla\cdot f_i(\nabla\phi^{k+1})
        -\frac{1}{\veps}\nabla\cdot f_e(\nabla\phi^{k}),
        \quad k\geq0.
           \label{eq:TFMBEBDF1cs}
	\end{align}
	Here $f_i(\bv)=|\bv|^2 \bv$. These schemes can be proved to be
    unconditionally stable but at each time step one needs to solve a
    nonlinear system.
\end{rem}

\smallskip
\section{Maximum principle for the Allen-Cahn equation}

Similar to the classical Allen-Cahn equation (see, e.g.,
\cite{Shen.etal2016c}), we can establish a discrete maximum principle
for the time fractional AC equation. Note that if the discrete maximum
principle is valid, then the global Lipschitz condition on $f$ can be
removed as the numerical solutions are bounded by the initial data. In
other words, the modified bulk potential function
(\ref{eq:doublewell_tr}) may not be needed; instead the standard
double-well potential (\ref{eq:doublewell}) can be used.

Next, we show that scheme \eqref{eq:ACFDstab} with the standard
double-well potential (\ref{eq:doublewell}) satisfies a discrete maximum
principle provided that the time step size is sufficiently small.

\medskip

\begin{thm} \label{thm:MPdiscrete} Suppose $\phi_0 \in C^0$ and
  $M_1 \le \phi_0(x) \le M_2$ for all $x\in\bar\Omega$.  Let
  $\{ \phi^k, k \ge 1 \}$ be the solution of the semi-discretized scheme
  \eqref{eq:ACFDstab} with a standard double-well potential that
  satisfies property \eqref{eq:pot_coercive}. If
  \begin{equation}
    \label{eq:mpdisc_cond}
	\frac{b_0-b_1}{\tau} + {S} \ge
    \gamma\frac{\max_{M_1\le u\le
        M_2}|f'(u)|}{\varepsilon},
  \end{equation}
  then we have
  \begin{equation}
    M_1 \le \phi^k(x) \le M_2,\quad \forall\ k\ge 1, \, {x\in\bar\Omega}.
  \end{equation}
\end{thm}
\begin{proof}
  Using \eqref{eq:bterm_equiv} to rewrite the scheme \eqref{eq:ACFDstab}
  as
  \begin{align}
	&\frac{b_0}{\gamma\tau} \phi^{k+1}
   + \frac{S}{\gamma} \phi^{k+1}
   - \varepsilon \Delta \phi^{k+1} \nonumber\\
	& \qquad =
   \frac{1}{\gamma\tau}
   \Bigl[\sum_{j=0}^{k-1}(b_{j}-b_{j+1})\phi^{k-j}+b_{k}\phi^{0}\Bigr]
   + \frac{S}{\gamma}\phi^{k}
   -\frac{1}{\veps}f(\phi^{k}),
   \quad  k\geq0,
   \label{eq:ACFDstab2}
  \end{align}
  We first consider the case $k=0.$ By using the fact that
  $f(\phi^k)=f(\phi^k)-f(M_2)=f'(\xi)(\phi^k-M_2)$ we obtain
  \begin{align*}
    & \frac{b_0}{\gamma\tau} \phi^{1}
      + \frac{S}{\gamma} \phi^{1}
      - \varepsilon \Delta \phi^{1} \\
    & =
      \frac{b_0}{\gamma\tau} \phi^0
      + \frac{S}{\gamma}\phi^0
      + \frac{1}{\varepsilon} f'(\xi)(M_2-\phi^0) 	\\
    & = \frac{b_0}{\gamma\tau} M_2
      + \frac{S}{\gamma} M_2
      +  \Bigl(\frac{1}{\varepsilon} f'(\xi)-\frac{S}{\gamma}
      - \frac{b_0}{\gamma\tau}\Bigr) (M_2-\phi^0) 	\\
    & \le \frac{b_0}{\gamma\tau} M_2 + \frac{S}{\gamma} M_2.
    \end{align*}
	The last inequality is a result of condition
    \eqref{eq:mpdisc_cond}. Now, let $x_0$ be a maximum point of
    $\phi^{1}$, if $\phi^{1}(x_0)>M_2$, we get
    \[
      \frac{b_0}{\gamma\tau} \phi^{1}
      + \frac{S}{\gamma} \phi^{1}
      - \varepsilon \Delta \phi^{1}
      >  \frac{b_0}{\gamma\tau}M_2 + \frac{S}{\gamma} M_2
      \quad \text{at}\;\;  x=x_0,
    \]
    which contradicts with the above inequality. Thus, we have
    $\max_{x\in\bar\Omega}\phi^{1}(x)\le M_2$.

    Next, suppose $\max_{x\in\bar\Omega}\phi^j(x)\le M_2$ hold for
    $j=0,\ldots, k$. Then by using \eqref{eq:bprop}, the fact
    $f(\phi^k)=f(\phi^k)-f(M_2)=f'(\xi)(\phi^k-M_2)$ and the condition
    \eqref{eq:mpdisc_cond}, we get
  \begin{align*}
	& \frac{b_0}{\gamma\tau} \phi^{k+1}
   + \frac{S}{\gamma} \phi^{k+1}
   - \varepsilon \Delta \phi^{k+1} \\
	& \leq
   \frac{b_1}{\gamma\tau} M_2
   + \frac{b_0-b_1}{\gamma\tau}\phi^k
   + \frac{S}{\gamma}\phi^k + \frac{1}{\varepsilon} f'(\xi)(M_2-\phi^k) 	\\
	& = \frac{b_0}{\gamma\tau} M_2
   + \frac{S}{\gamma} M_2
   +  \Bigl(\frac{1}{\varepsilon} f'(\xi)-\frac{S}{\gamma}
   - \frac{b_0-b_1}{\gamma\tau}\Bigr) (M_2-\phi^k) 	\\
	& \le \frac{b_0}{\gamma\tau} M_2 + \frac{S}{\gamma} M_2.
  \end{align*}
  Again, let $x_0$ be a maximum point of $\phi^{k+1}$, if
  $\phi^{k+1}(x_0)>M_2$, we obtain
  \[
    \frac{b_0}{\gamma\tau} \phi^{k+1}
	+ \frac{S}{\gamma} \phi^{k+1}
	- \varepsilon \Delta \phi^{k+1}
    >  \frac{b_0}{\gamma\tau}M_2 + \frac{S}{\gamma} M_2
	\quad\text{at}\ x=x_0,
  \]
  which contradicts the above inequality.  Thus, we have
  $\max_{x\in\bar\Omega}\phi^{k+1}(x)\le M_2$. By mathematical induction,
  we have $\max_{x\in\bar\Omega}\phi^j(x)\le M_2$ for any $j\ge 1$. The
  lower bound can be proved similarly.
\end{proof}

\medskip

It is well-known that the solution of linear time-fractional parabolic
equations admits some kind of initial singularity (see, e.g.,
\cite{Allen.etal2016,Brunner2004, Jin.etal2013,Mustapha.Schotzau2014,
  stynes_error_2017,Tang1993b}):
\begin{equation}
  \phi(t) \in C([0,T]),\quad |\phi_t| \le C_\alpha t^{\alpha-1}
  \label{eq:Solu_regularity}.
\end{equation}
The well-posedness and the limited regularity of the time-fractional AC
equation \eqref{eq:fracAC}-\eqref{eq:fracAC_BC} with $F$ satisfying
\eqref{eq:pot_Lip} is studied recently by Jin et
al. \cite{jin_numerical_2018} in the setting of nonlinear sub-diffusion
equation. It is proved in \cite{jin_numerical_2018} that if
$\phi_0\in H_0^1(\Omega)\cap H^2(\Omega)$, then
\eqref{eq:fracAC}-\eqref{eq:fracAC_BC} admits a unique solution $\phi$
satisfying
\begin{align}
  \label{eq:tfrac_AC_reg1}
  & \phi \in C^\alpha([0,T];
    L^2(\Omega)) \cap C([0,T]; H_0^1(\Omega)\cap H^2(\Omega)),
    \quad \partial_t^\alpha \phi \in C([0,T]; L^2(\Omega)), \\
  & \partial_t \phi(t) \in
    L^2(\Omega)\quad
    \text{and}\quad \| \partial_t
    \phi(t) \|_{L^2(\Omega)} \le c
    t^{\alpha-1},\quad \text{for}\
    t\in(0,T].
    \label{eq:tfrac_AC_reg2}
  \end{align}
  Correspondingly, the numerical solutions of scheme \eqref{eq:ACFDstab}
  with a modified bulk potential function (\ref{eq:doublewell_tr}) can
  be proved to satisfy the following convergence property using standard
  technique in \cite{jin_numerical_2018}
  \begin{equation}
	\label{eq:tfracAC_stab_conv}
	\max_{1\le n\le N} \| \phi(n\tau) - \phi^n \|_{L^2(\Omega)} \le c\tau^\alpha.
  \end{equation}
  Since the piecewise extension
  $\phi_\tau(x,t):= (n+1-t/\tau)\phi^n + (t/\tau-n) \phi^{n+1}$
  ($n\tau \le t \le n\tau + \tau$) converges to the weak solution
  $\phi(x,t)$ in $C([0,T]; L^2(\Omega)),$ using the fact that \
  $\phi_\tau$ is bounded uniformly (Theorem \ref{thm:MPdiscrete}) for
  sufficiently small $\tau$ and $\phi$ is a continuous function yields
  the following maximum principle result.
  
  \medskip
  \begin{thm}
    \label{eq:MPweak}
    Let $\phi(x,t)$ is a weak solution of
    \eqref{eq:fracAC}-\eqref{eq:fracAC_BC} with a potential function
    $F(\phi)$ described by \eqref{eq:pot_coercive}. Suppose that
    $\phi(x,0)=\phi_0 \in H_0^1(\Omega)\cap H^2(\Omega)$,
    $\Omega\subset \mathbb{R}^d, d=2,3$ and $M_1 \le \phi_0(x) \le M_2$
    for all $x\in\bar\Omega$, then
	\begin{equation*}
      M_1 \le \phi(x,t) \le M_2,
      \quad \forall\ a.e.\ {(x,t)\in \bar\Omega\times[0,T]}.
	\end{equation*}
  \end{thm}

  \medskip
  \begin{rem} 
	\label{thm:tFACEmaxp} The above theorem provides a maximum principle
    for the week solution. On the other hand, one can also obtain a
    maximum principle directly for the strong solution.  Suppose the
    initial value $\phi_0$ and the spatial domain $\Omega$ are
    sufficiently smooth such that the solution $\phi$ of
    \eqref{eq:fracAC}-\eqref{eq:fracAC_BC} with a potential function
    $F(\phi)$ described by \eqref{eq:pot_coercive} satisfies
	\begin{equation}\label{eq:assum}
      \Delta\phi \in C^0((0,T]\times\Omega), \qquad
      \phi_t\in C^0((0,T]\times\Omega),
      \quad
      \phi\in C^0([0,T]\times\Omega).
	\end{equation}
    If the initial data is bounded, i.e., $ M_1 \le \phi_0 \le M_2$ for
    all $x\in \Omega$, then one can use a standard technique to prove
    that
    \[
      M_1 \le \phi(x,t) \le M_2   \quad \forall x\in \Omega, \; \;t>0.
    \]
    We point out that the regularity assumptions (\ref{eq:assum}) have
    been shown reasonable recently, as reported in \cite{Du}.
  \end{rem}

\medskip

\section{Numerical experiments}

In this section, numerical schemes proposed in the last section will be
employed to study the coarsening rates of the time-fractional
phase-field models.  We solve the phase-field equations in
$\Omega=[0,L_x]\times[0,L_y]$ with periodic boundary conditions.  A
Fourier-Galerkin method is used for the spatial discretization. To
enhance the computational efficiency, the fast sum-of-exponential
algorithm developed in \cite{jiang_fast_2017} is used to evaluate the
history part of the time-fractional derivatives.

\subsection{Numerical results for the time-fractional AC
  equation}

\begin{figure}[htbp]
  \centering
  \includegraphics[width=0.3\textwidth]{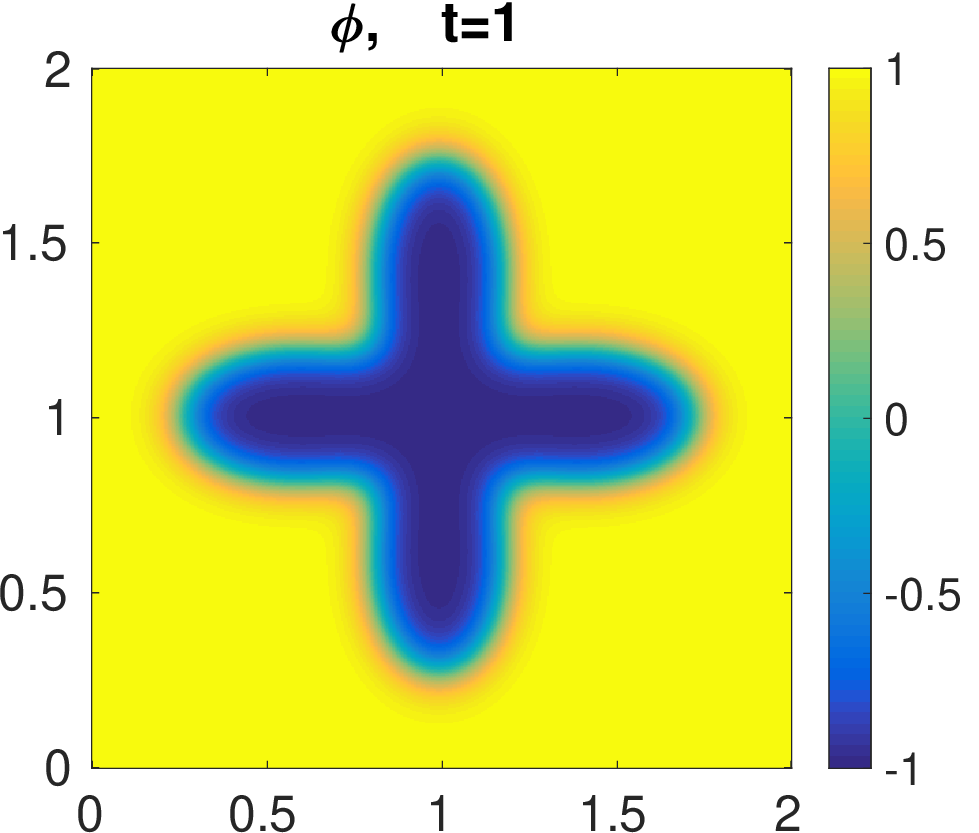}
  \includegraphics[width=0.3\textwidth]{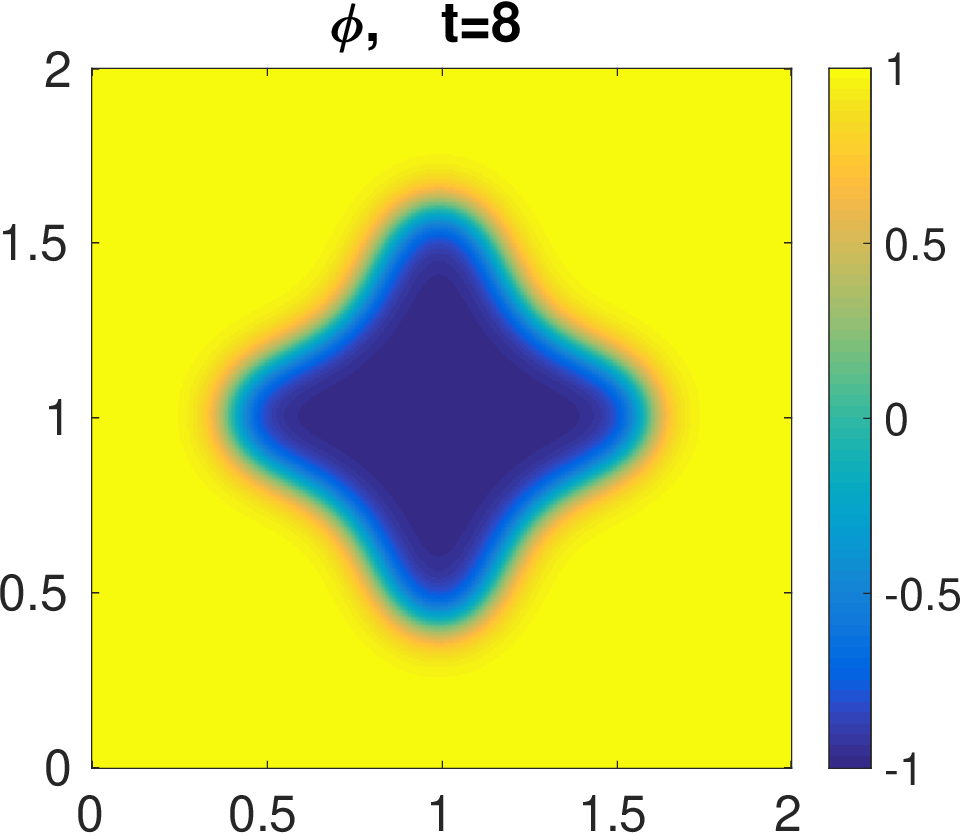}
  \includegraphics[width=0.3\textwidth]{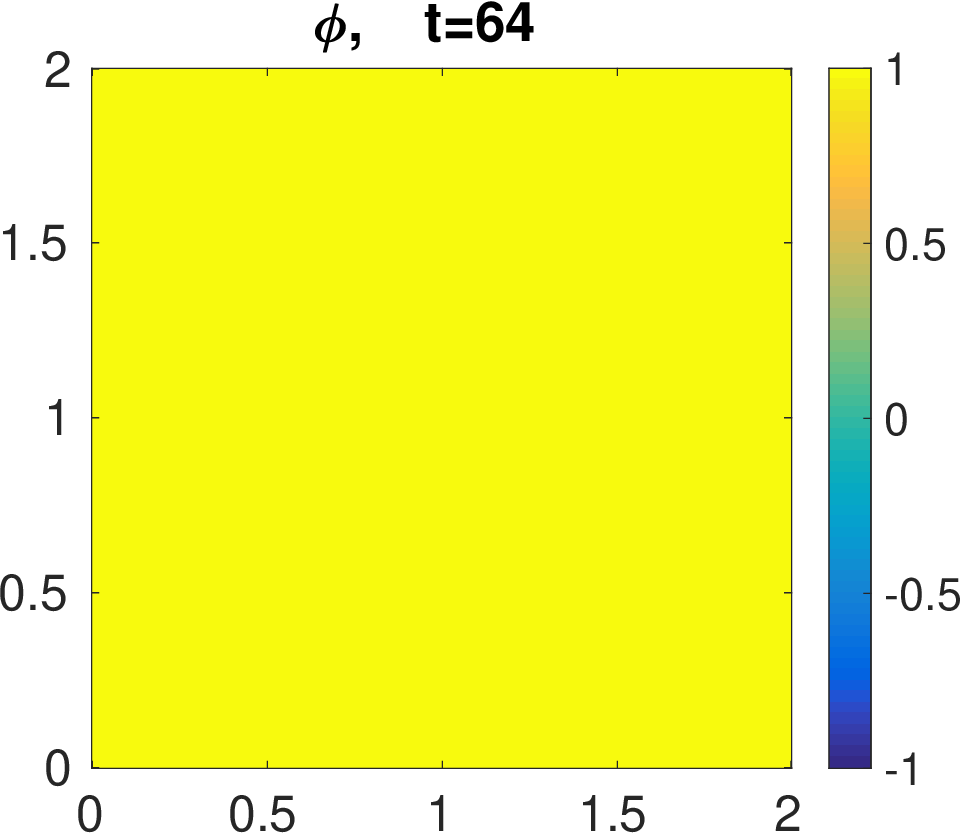}\\
  \includegraphics[width=0.3\textwidth]{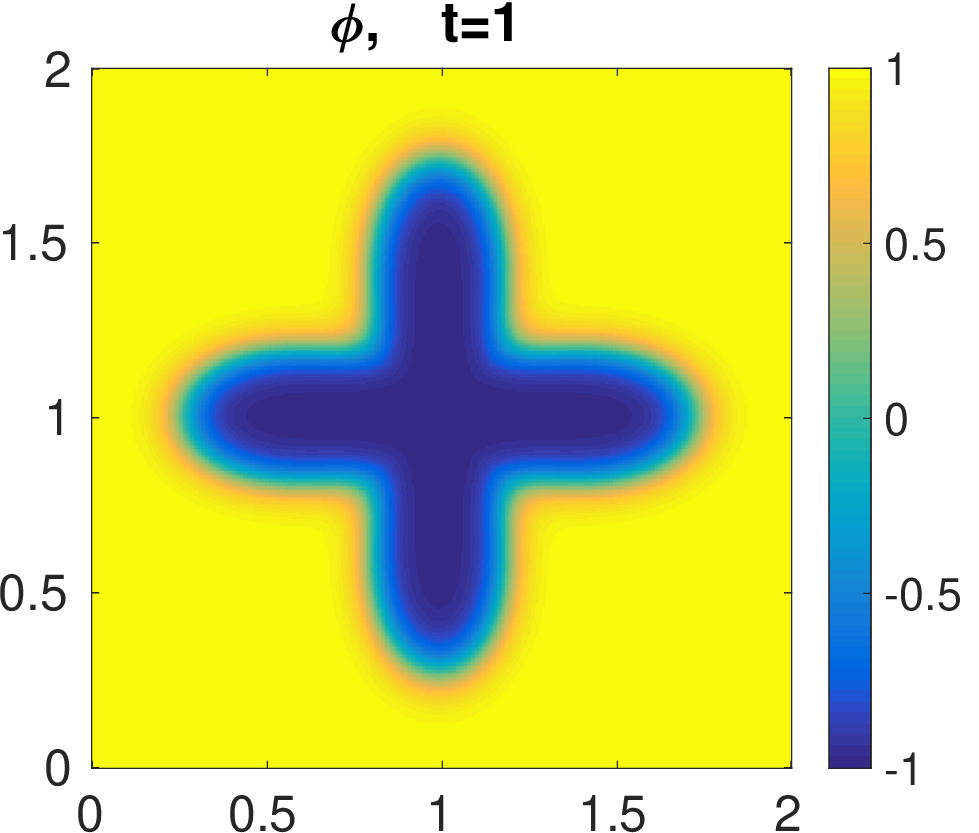}
  \includegraphics[width=0.3\textwidth]{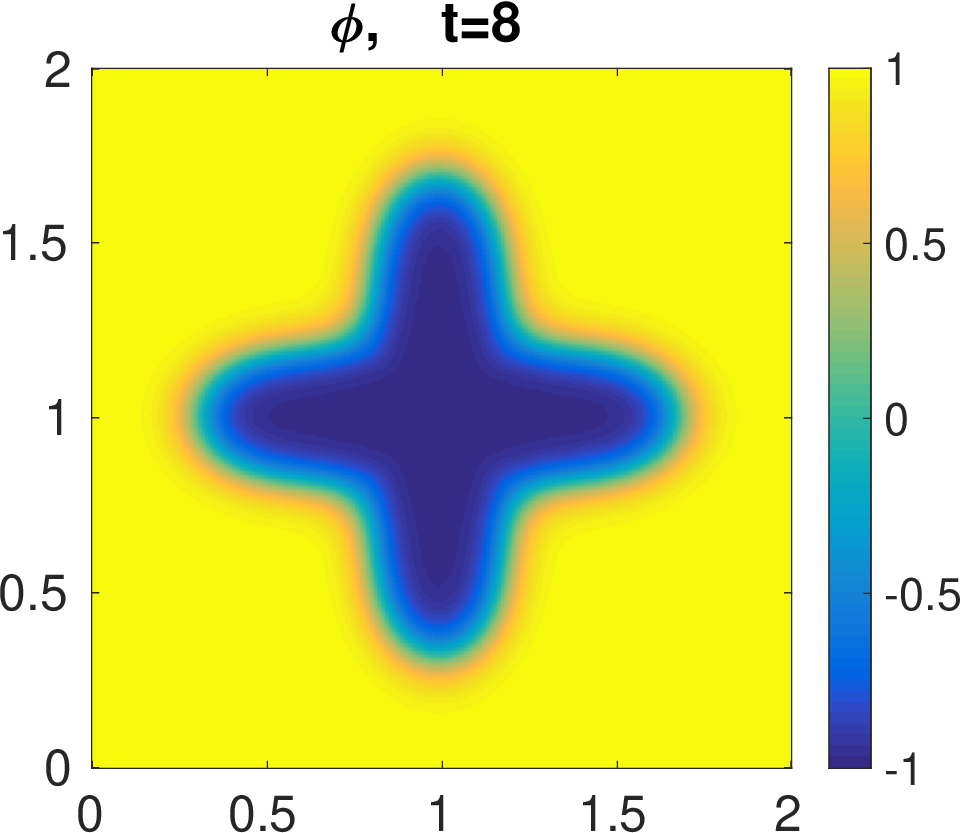}
  \includegraphics[width=0.3\textwidth]{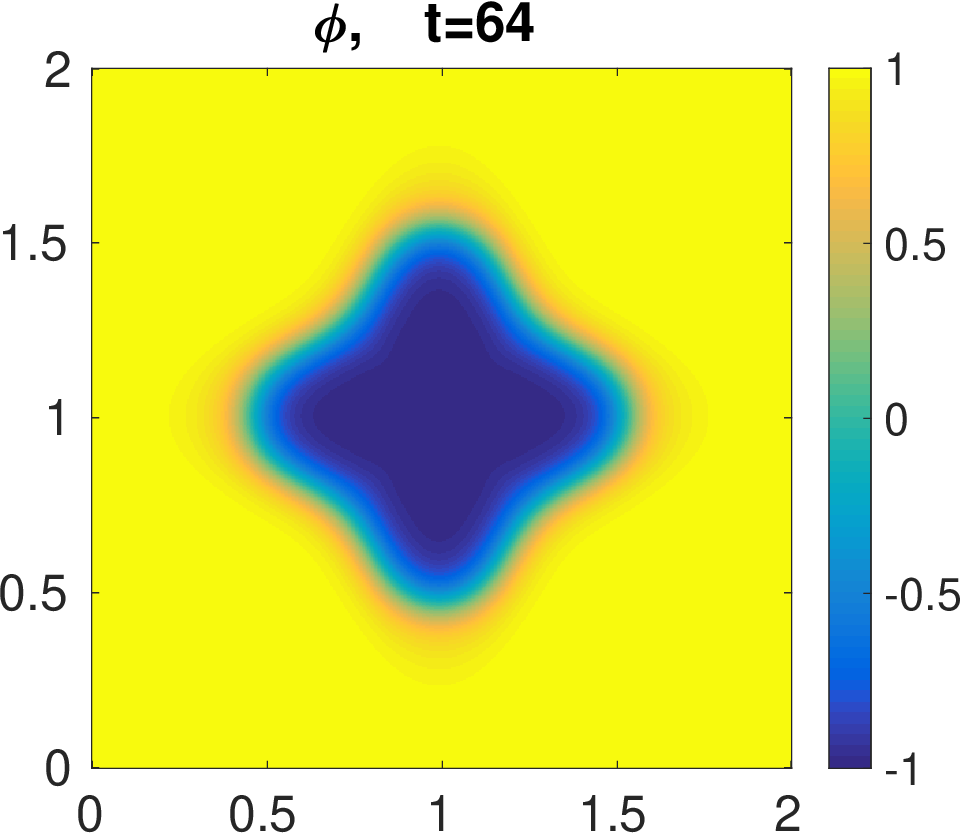}\\
  \includegraphics[width=0.3\textwidth]{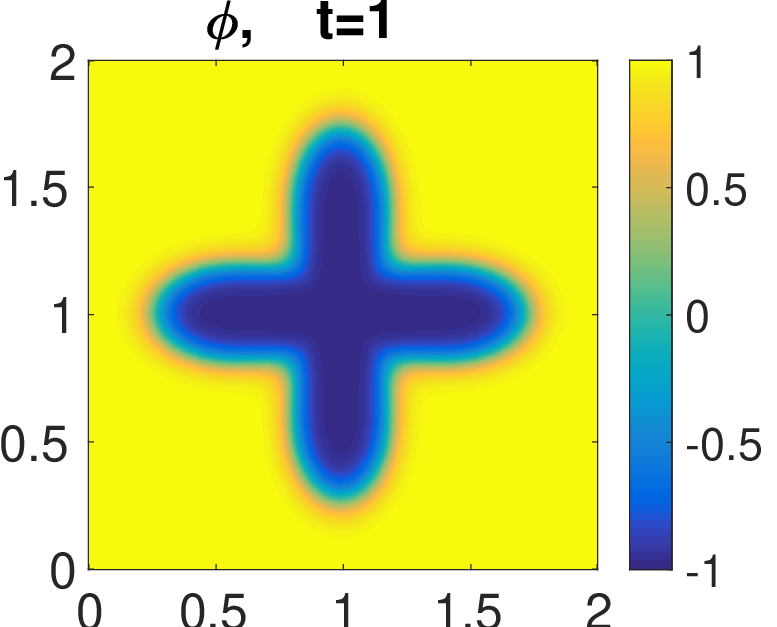}
  \includegraphics[width=0.3\textwidth]{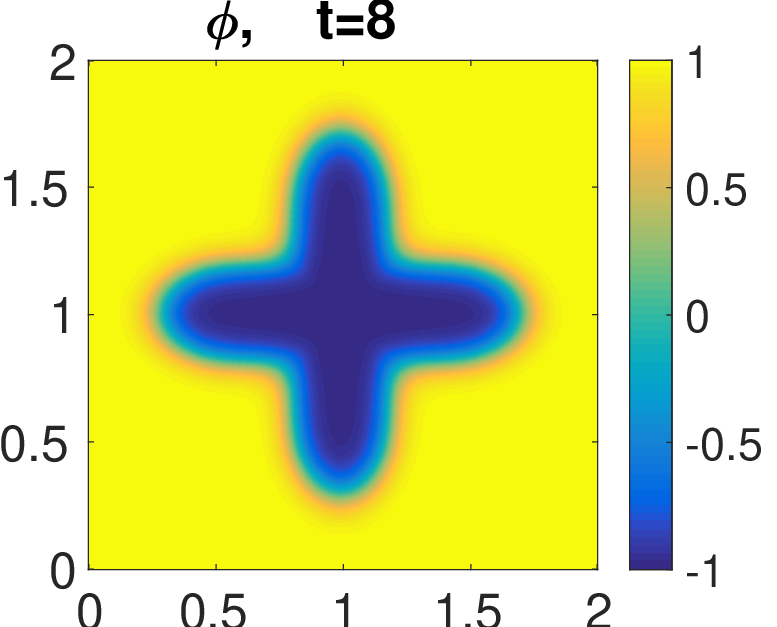}
  \includegraphics[width=0.3\textwidth]{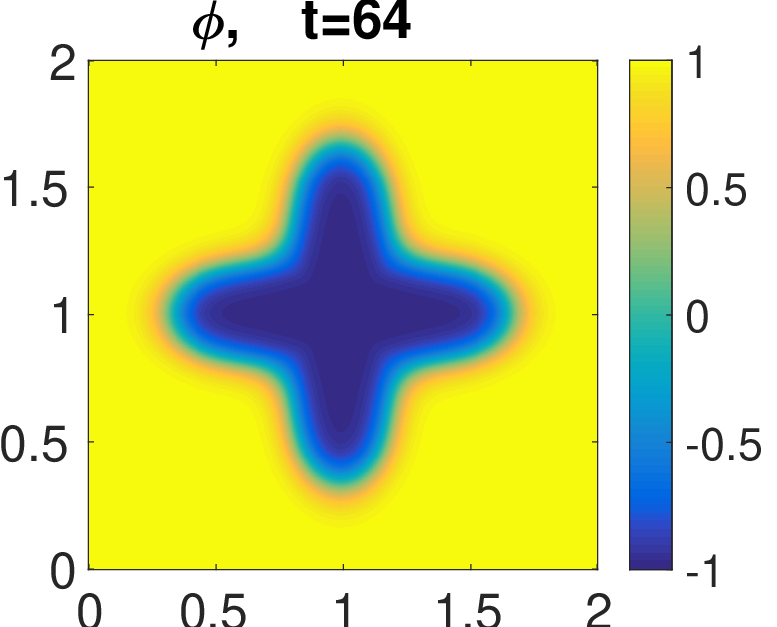}
  \caption{The snapshots of the solution of the
    time-fractional AC equation with $\alpha=1, 0.5, 0.3$
    (top,middle, bottom row, respectively). }
  \label{fig:1FACEsolu}
\end{figure}
\begin{figure}[htpb!]
  \centering
  \includegraphics[width=0.55\textwidth]{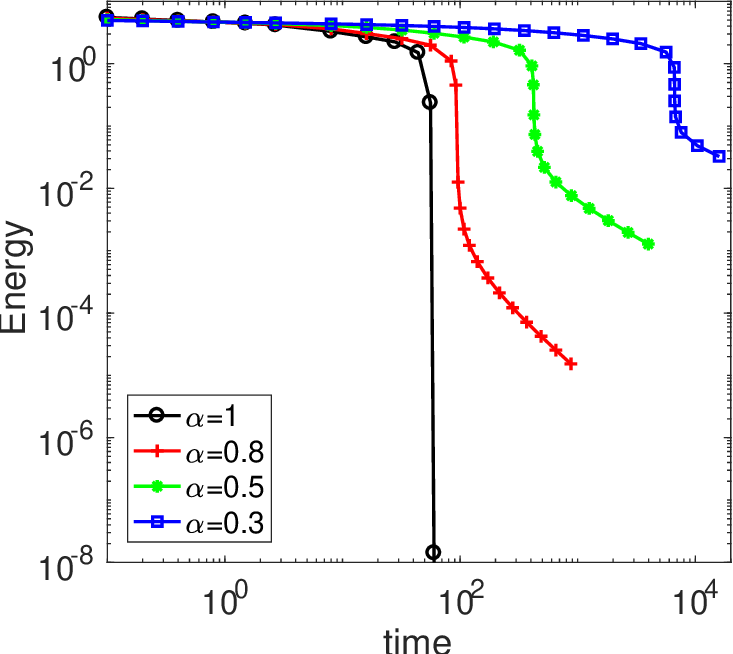}
  \caption{The energy dissipation for the time-fractional AC
    equation with different values of $\alpha$. }
  \label{fig:2FACEediss}
\end{figure}

In this case, we take $L_x=L_y=2, \veps=0.05,$ and $\gamma=0.05$. The
stabilization constant $S$ in scheme \eqref{eq:ACFDstab} is chosen as
$S=0.1$. We use $128\times 128$ Fourier modes in the physical domain,
and set the time step size as $\tau = 0.1$. The initial state is chosen
as
\begin{equation}
  \label{eq:iv_flower}
  \phi_0(x) =
  \tanh\left(\frac{1}{\sqrt{2}\veps} \Bigl( r
    -\frac14-\frac{1+\cos(4\theta)}{16}\Bigr)\right),
  \quad r = \sqrt{x^2+y^2},
  \; \theta = \arctan \frac{y}{x}.
\end{equation}
The phase field function and the energy evolution with various
fractional orders $\alpha$ are investigated. Fig. \ref{fig:1FACEsolu}
presents the phase field function $\phi$ with $\alpha = 0.3, 0.5, 1$ at
different time levels. It is clearly seen that it takes longer time to
reach the equilibrium if $\alpha$ becomes smaller. This observation can
be further verified by looking at the energy dissipation curves
displayed in Fig.~\ref{fig:2FACEediss}. It is seen from this figure that
the energy dissipation for the time-fractional AC equation has a long
tail effect.

\subsection{Numerical results for the time-fractional CH
  equation}

\begin{figure}[htbp!]
  \centering
  \includegraphics[width=0.28\textwidth]{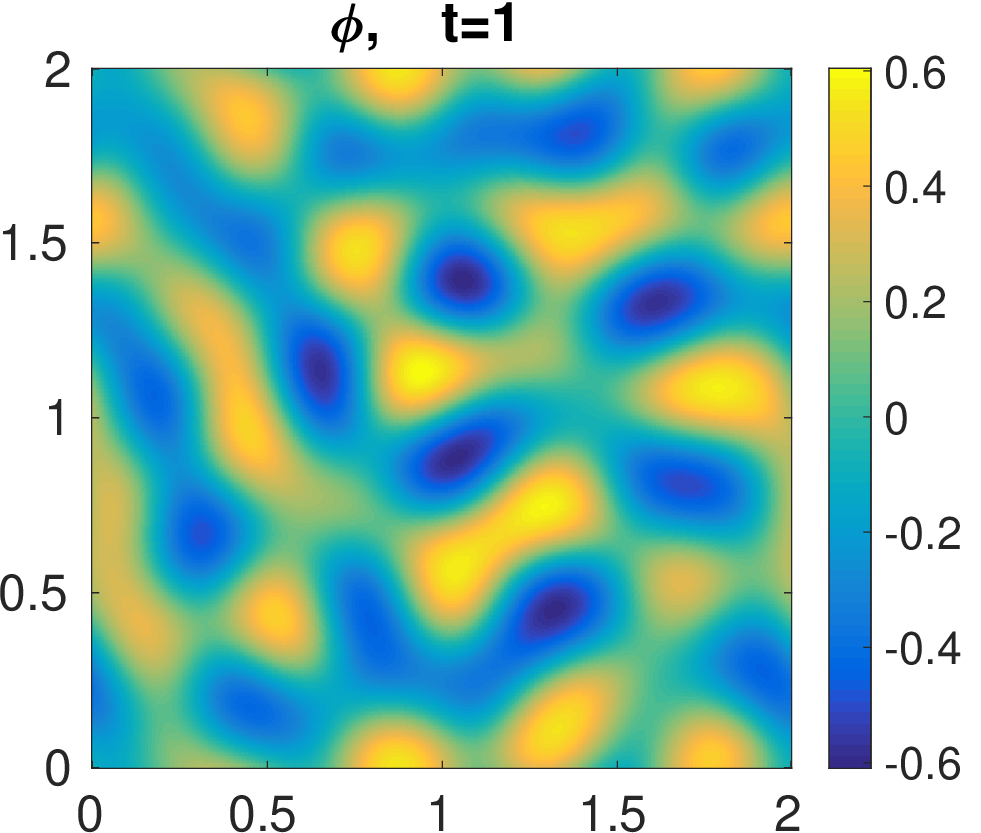}
  \includegraphics[width=0.28\textwidth]{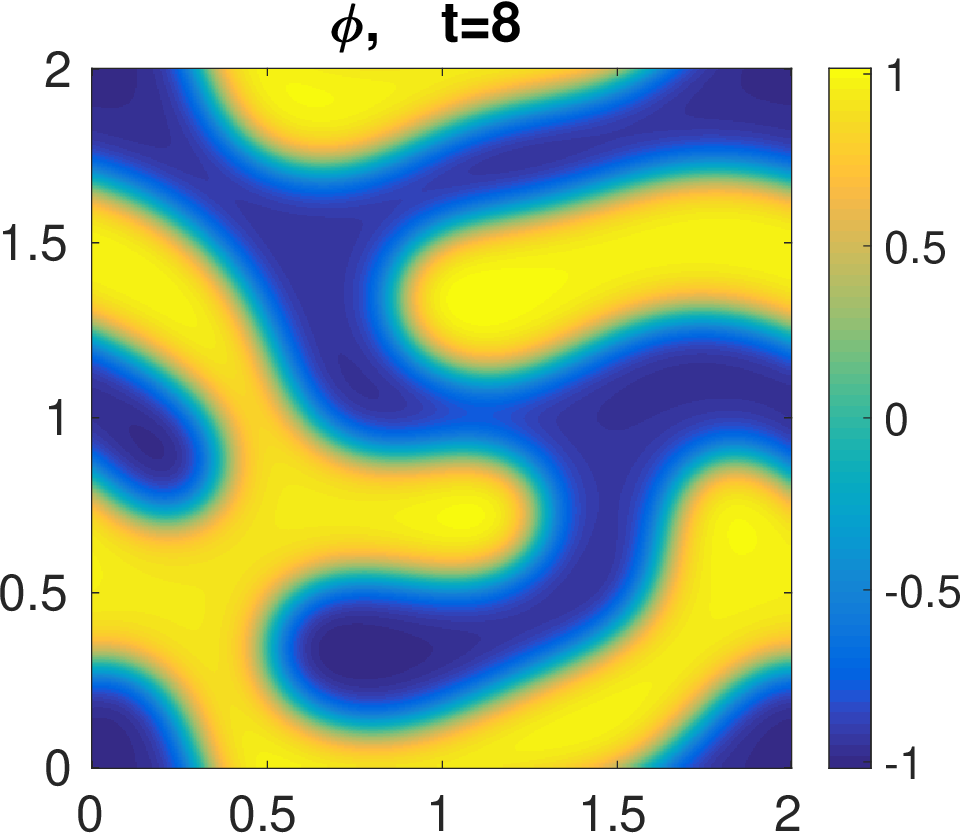}
  \includegraphics[width=0.28\textwidth]{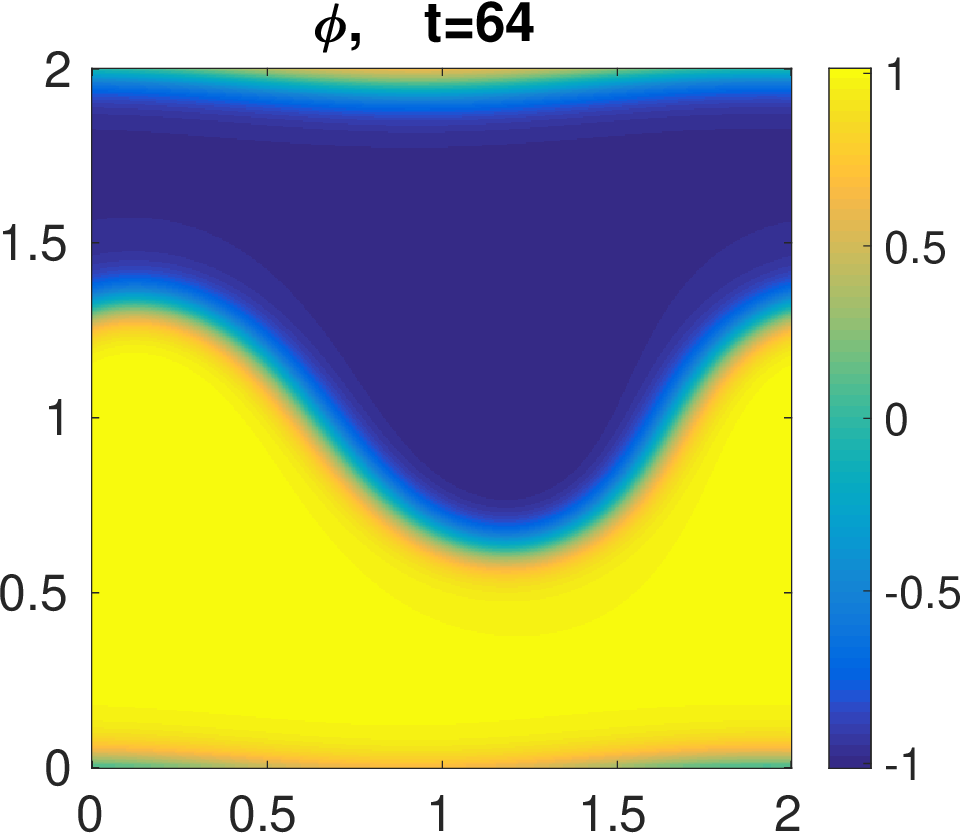}\\
  \includegraphics[width=0.28\textwidth]{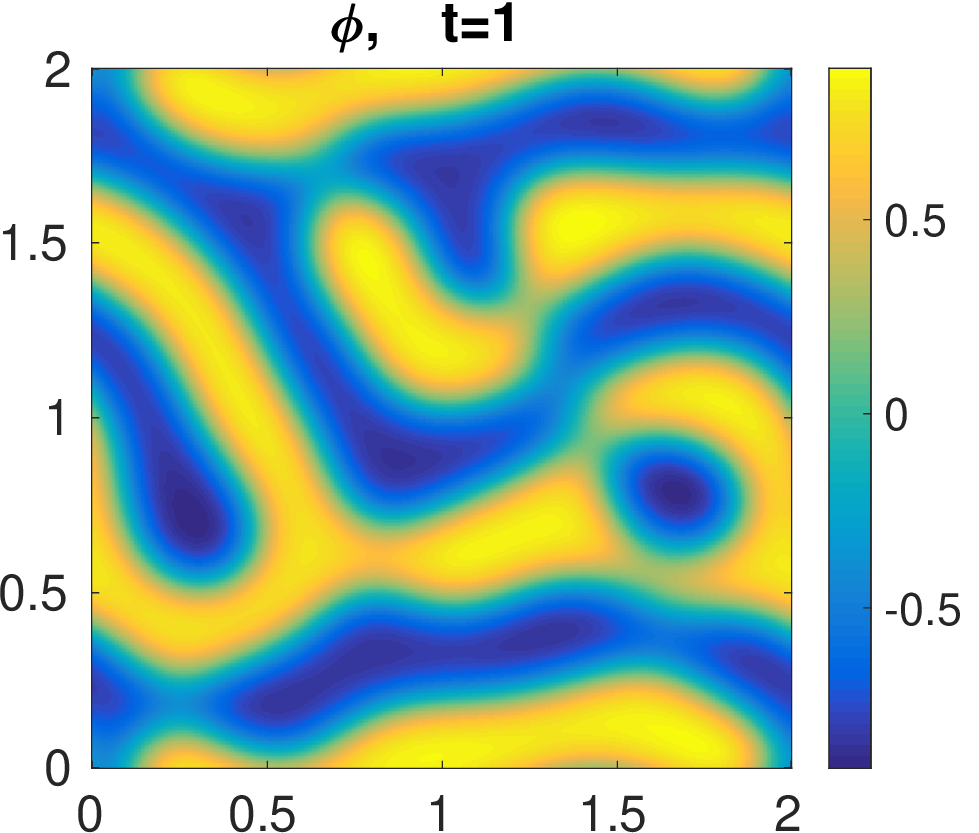}
  \includegraphics[width=0.28\textwidth]{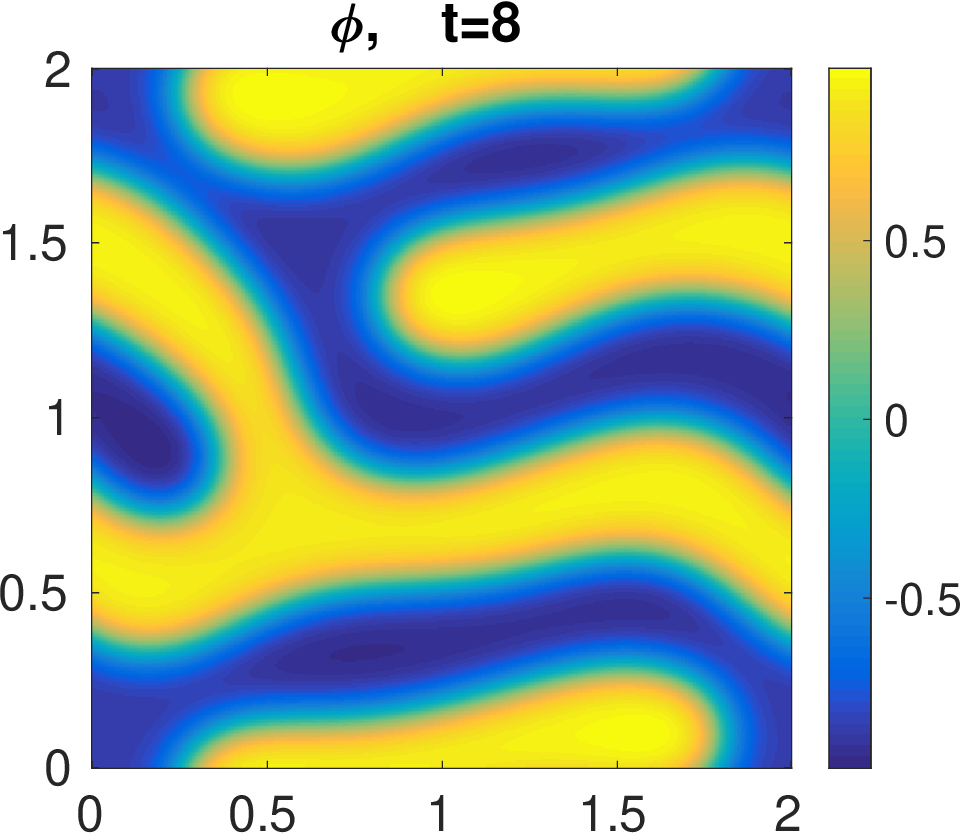}
  \includegraphics[width=0.28\textwidth]{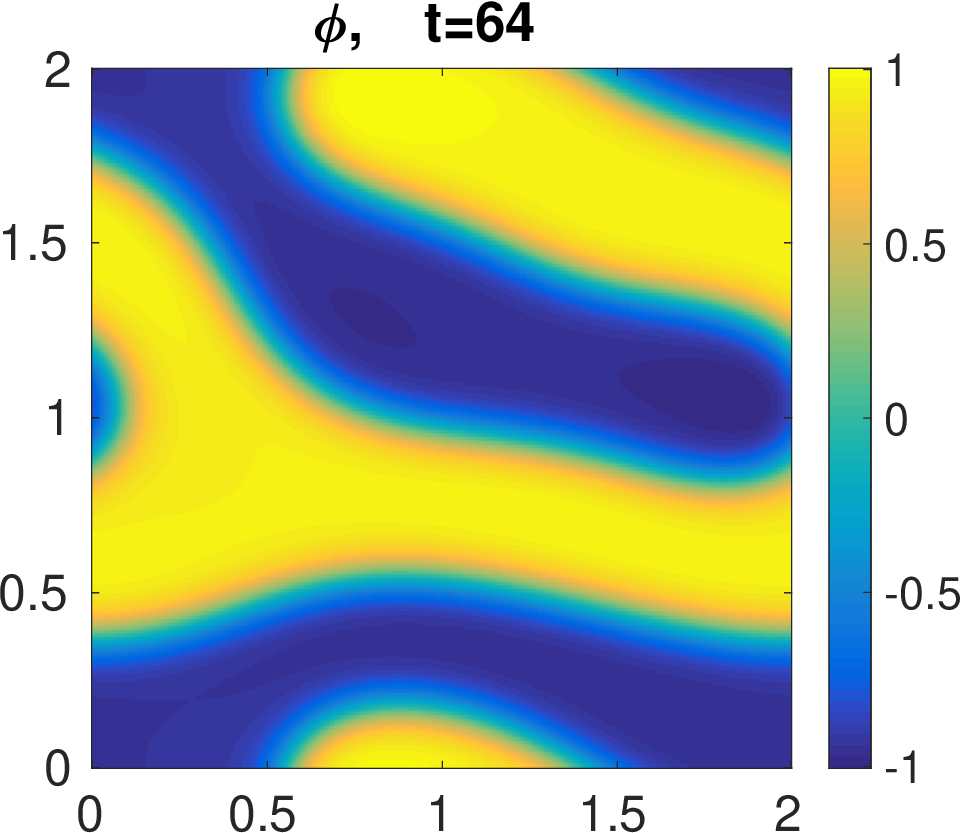}\\
  \includegraphics[width=0.28\textwidth]{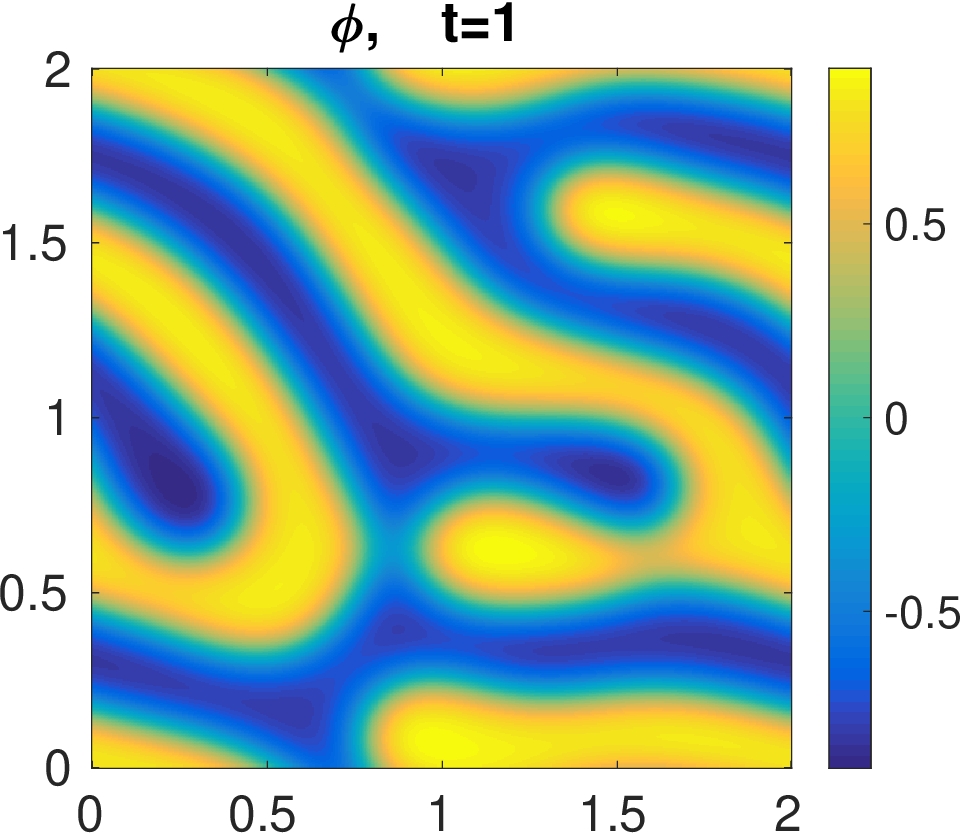}
  \includegraphics[width=0.28\textwidth]{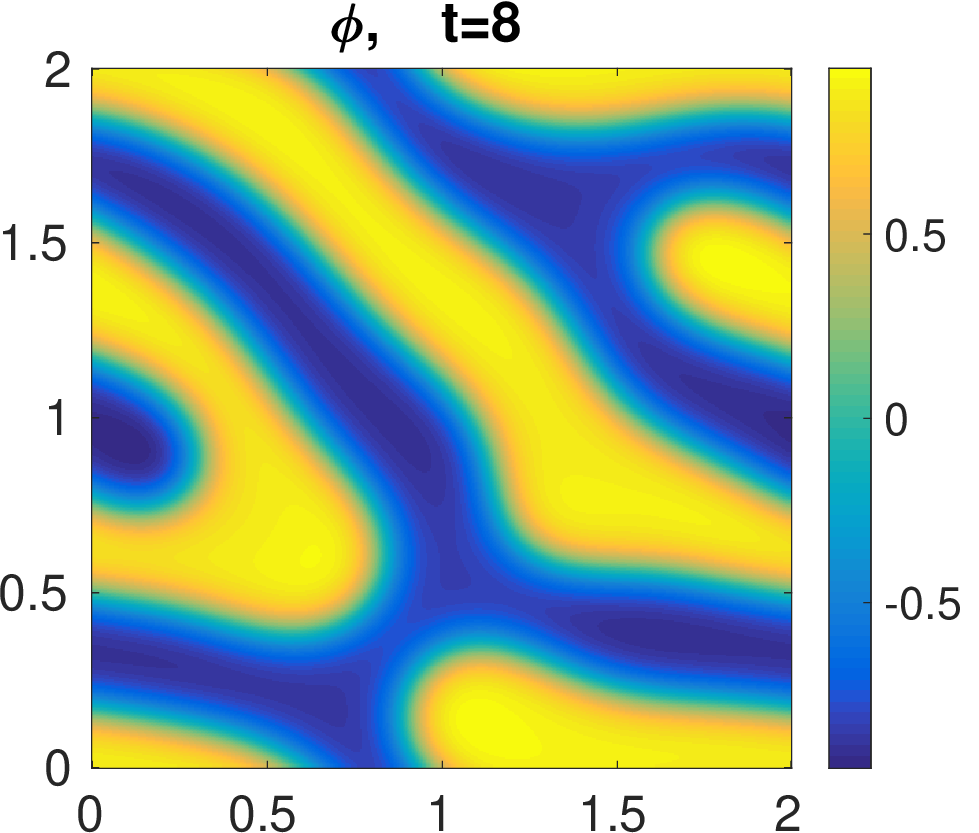}
  \includegraphics[width=0.28\textwidth]{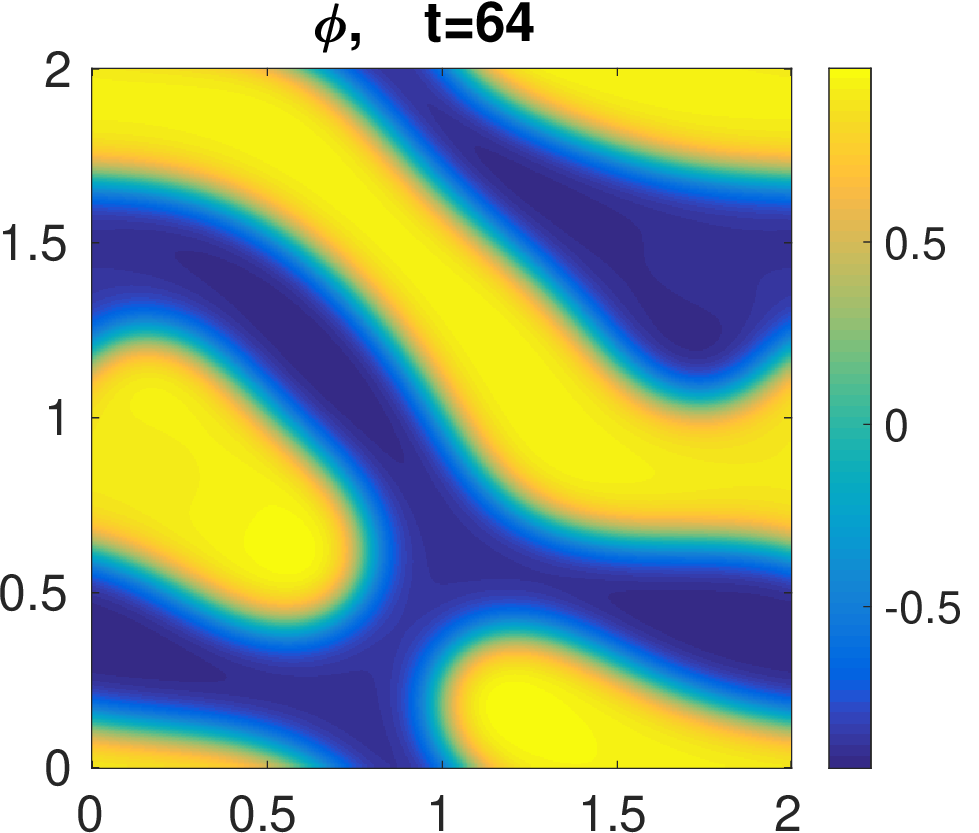}
  \caption{The snapshots of the solution of the time-fractional CH
    equation with $\alpha=1, 0.5, 0.3 $ (top, middle and bottom row,
    respectively). }
  \label{fig:3FCHEsolu}
\end{figure}

\begin{figure}[!htb]
  \centering
  \subfigure[The energy dissipation]{
    \includegraphics[width=0.45\textwidth]{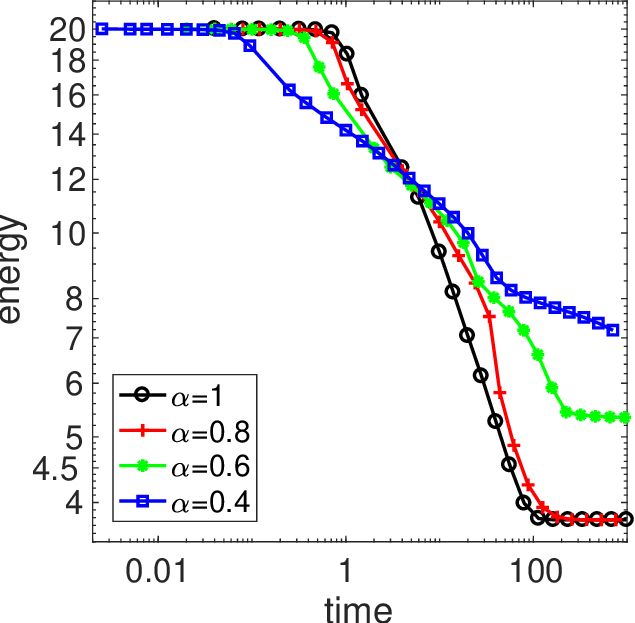}
    \label{fig:4FCHEediss}
  }
  \subfigure[The power law]{
    \includegraphics[width=0.45\textwidth]{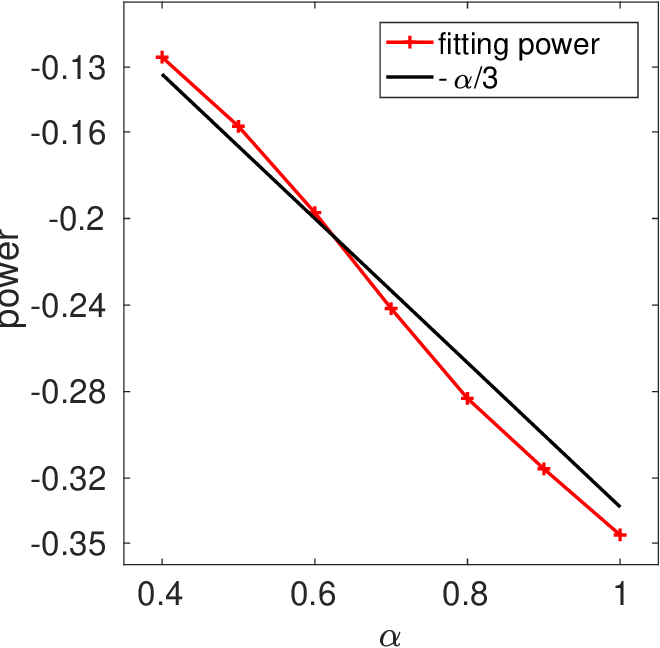}
    \label{fig:4FCHEpowerlaw}
  }
  \caption{The energy dissipation and the power-law for the
    time-fractional CH equation with different values of $\alpha$. }
  \label{fig:4FCHEepw}
\end{figure}

In this case, we take $L_x=L_y=2, \veps=0.05,$ and $\gamma=\veps^2$. A
uniformly random distribution field in $[-1,1]$ is taken as the initial
state, and the stabilization constant in scheme
\eqref{eq:TFCHEBDF1}-\eqref{eq:TFCHEBDF1_2} is chosen as $S=0.01$.  We
use $256\times256$ Fourier modes for spatial discretization. The time
step size is taken as $\tau = 0.001$. As shown in Section 3, the
numerical scheme is unconditionally stable if a suitable stabilization
constant is used.

We investigate again the time variation for the phase field function and
the energy evolution with different values of fractional parameters. Fig
\ref{fig:3FCHEsolu} presents the phase field function $\phi$ with
$\alpha = 0.3, 0.5, 1$ at different time levels. Again it is observed
that as $\alpha$ decreases the relaxation time reaching the equilibrium
{increases}. This assertion can be further verified by checking the
energy dissipation curves presented in Fig. \ref{fig:4FCHEediss}.  The
overall energy dissipation process can be split into three stages. In
the first stage, the bulk force is the driving force, and consequently
small scale phase separations form. In the second stage, small
structures interact with each other, so that an energy dissipation
power-law can be observed. In particular, the power-law behaves like
$E[\phi(t)] \approx C_\alpha t ^{p_\alpha}$. It is found by data fitting
that $p_\alpha \approx -\alpha/3$ (cf.  Fig. \ref{fig:4FCHEpowerlaw}),
which is consistent with the $-1/3$-law for the classical CH equation
(i.e., $\alpha =1$). In the last stage, the equilibrium solution with
the minimum energy is obtained.

\subsection{Numerical results for the time-fractional MBE model}

\begin{figure}[!htbp]
  \centering
  \includegraphics[width=0.3\textwidth]{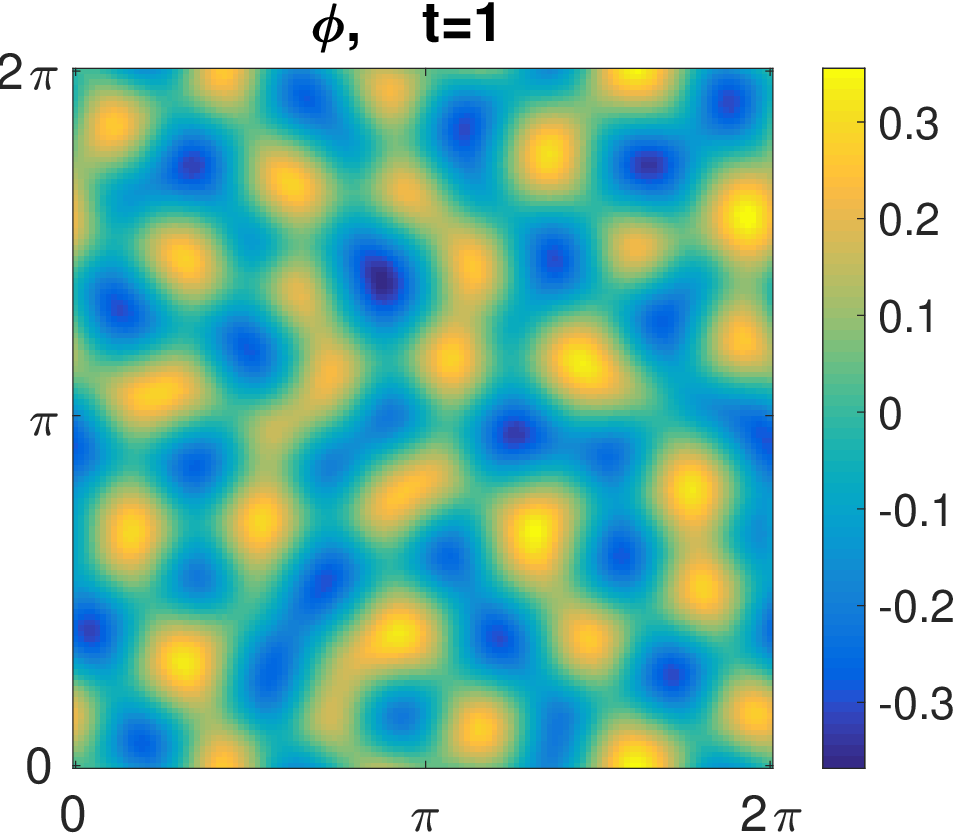}
  \includegraphics[width=0.3\textwidth]{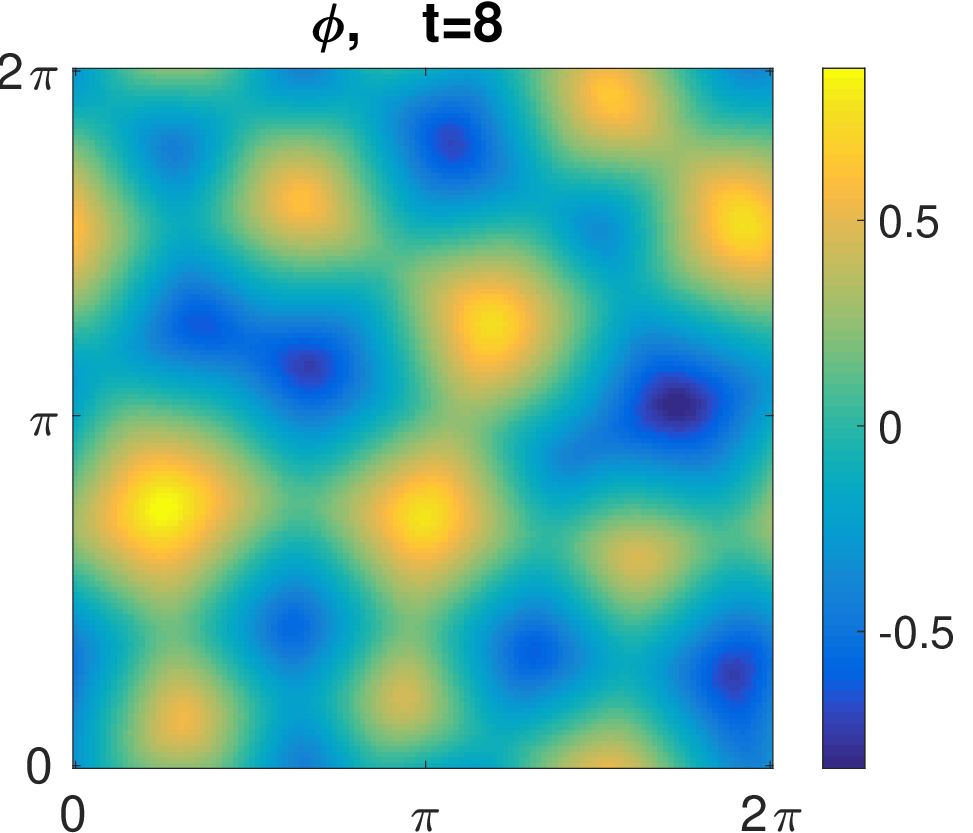}
  \includegraphics[width=0.3\textwidth]{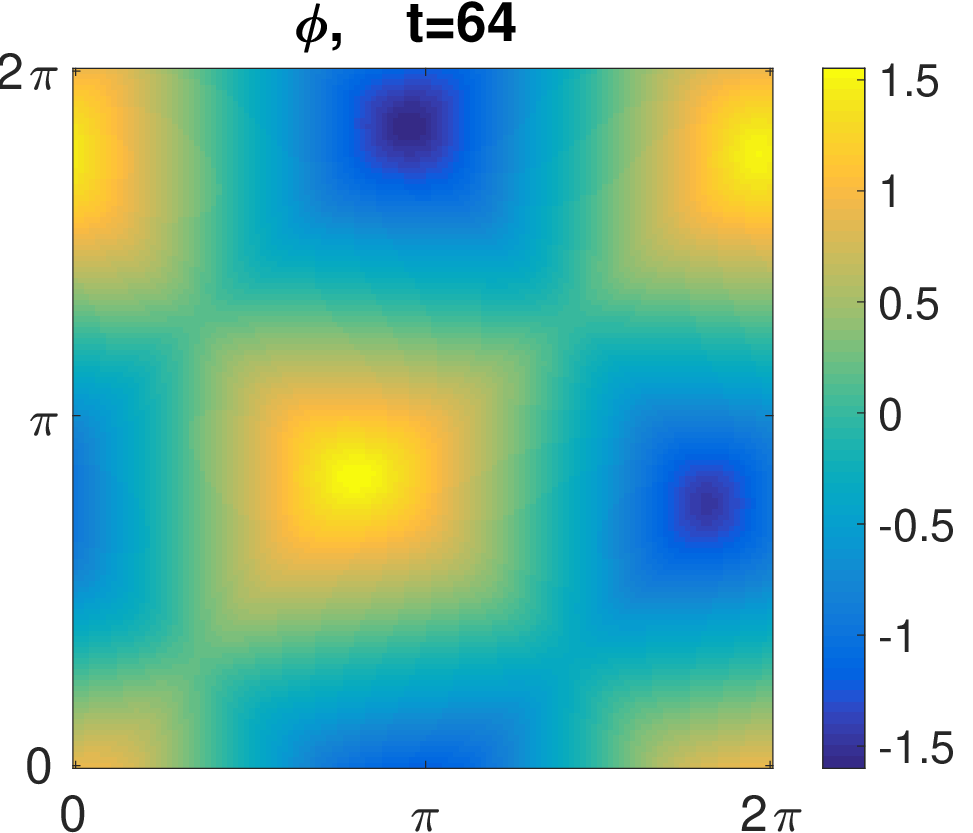}\\
  \includegraphics[width=0.3\textwidth]{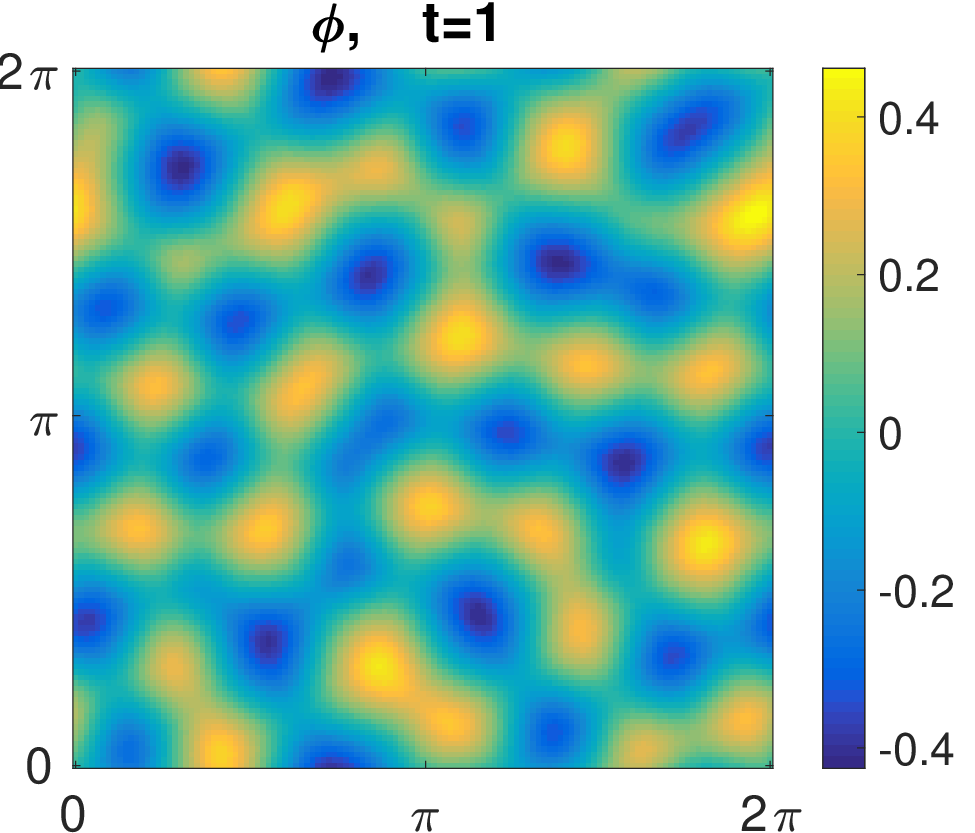}
  \includegraphics[width=0.3\textwidth]{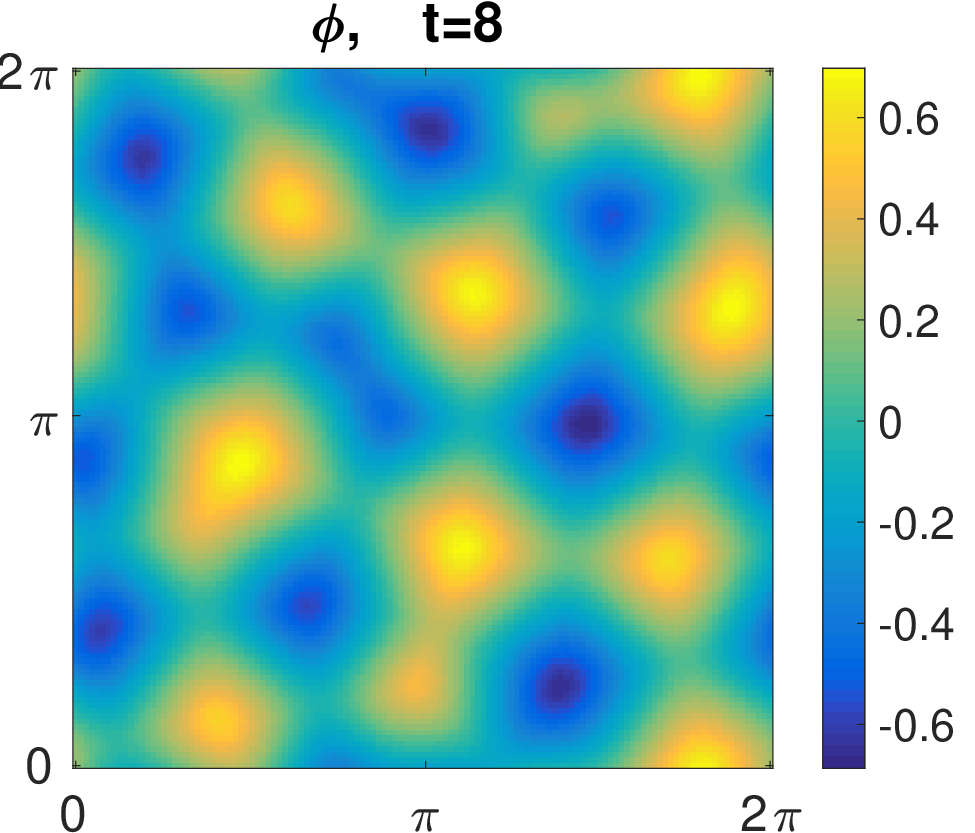}
  \includegraphics[width=0.3\textwidth]{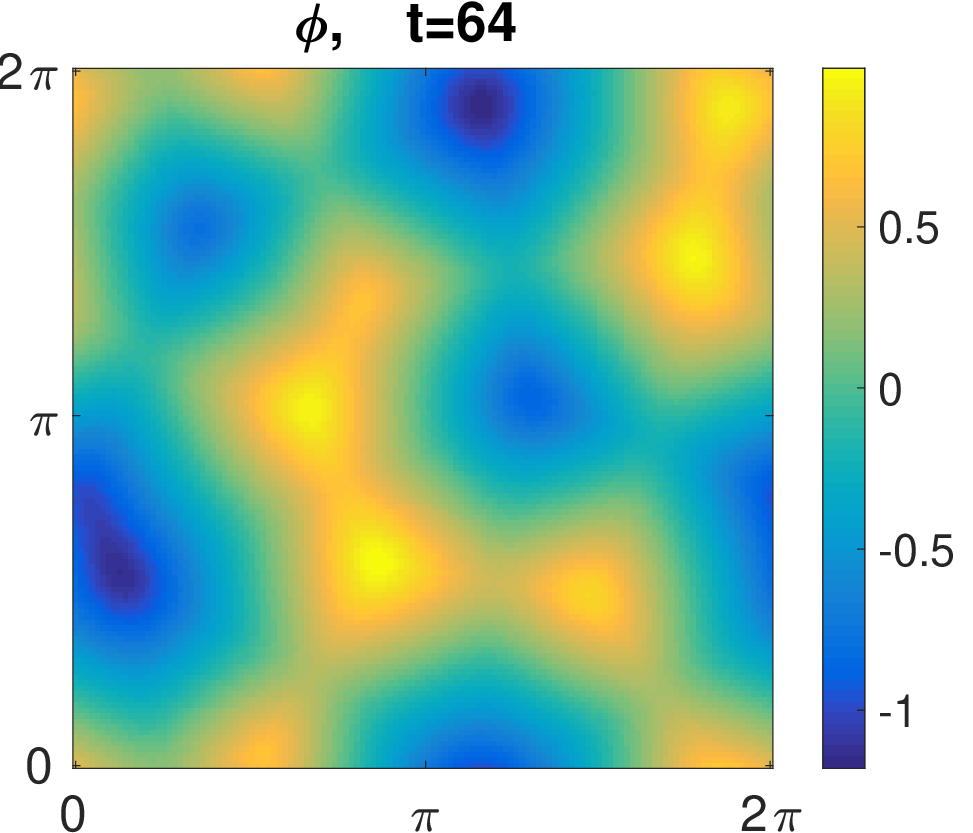}\\
  \includegraphics[width=0.3\textwidth]{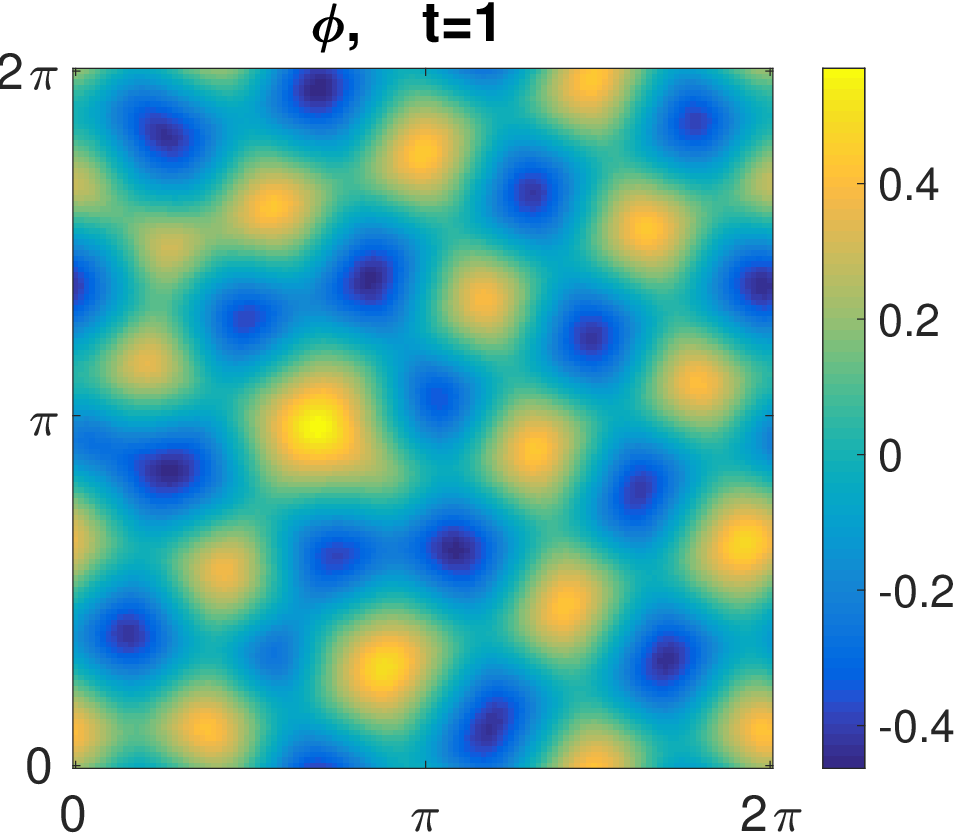}
  \includegraphics[width=0.3\textwidth]{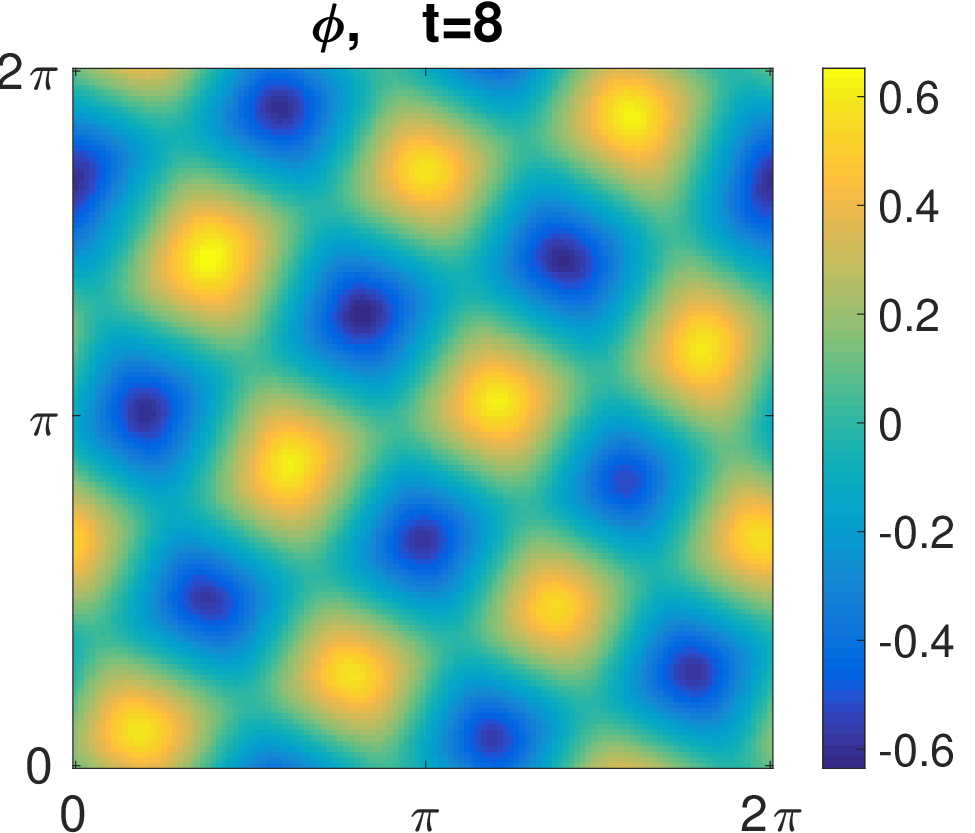}
  \includegraphics[width=0.3\textwidth]{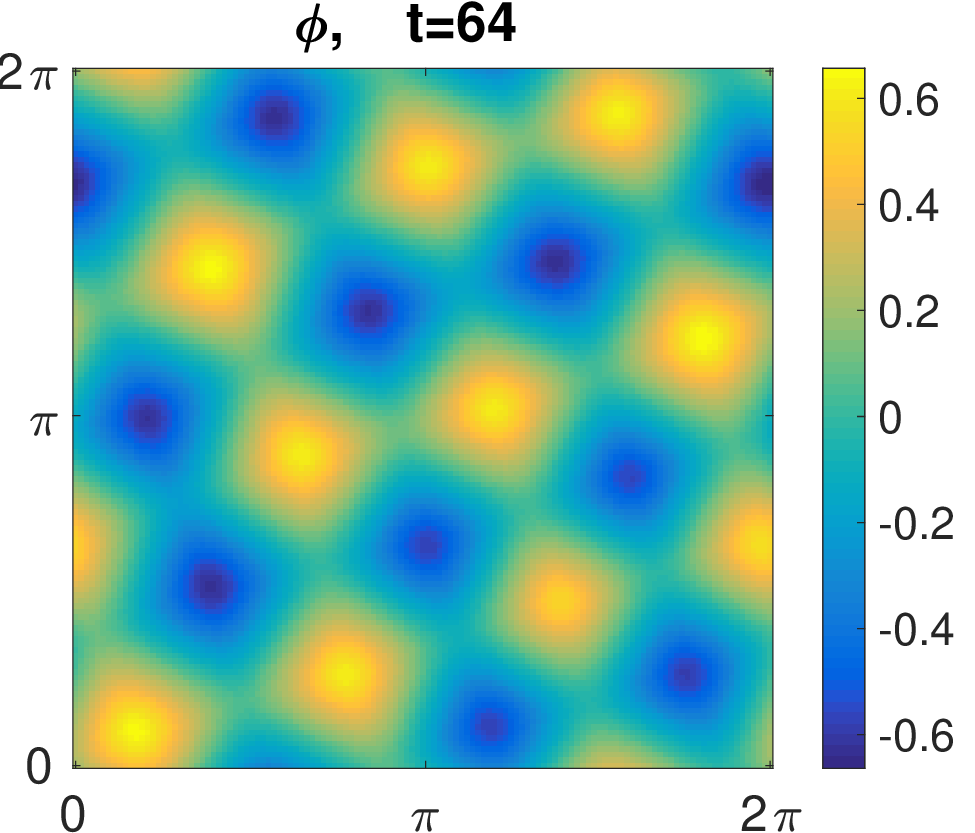}
  \caption{The snapshots of the numerical solution of the
    time-fractional MBE model with slop selection for
    $\alpha=1, 0.7$ and $0.4$ (top, middle and
    bottom row, respectively). }
  \label{fig:5FMBEsolu}
\end{figure}

\begin{figure}[!htb]
  \centering
  \subfigure[The energy dissipation]{
    \label{fig:6FMBEediss}
    \includegraphics[width=0.45\textwidth]{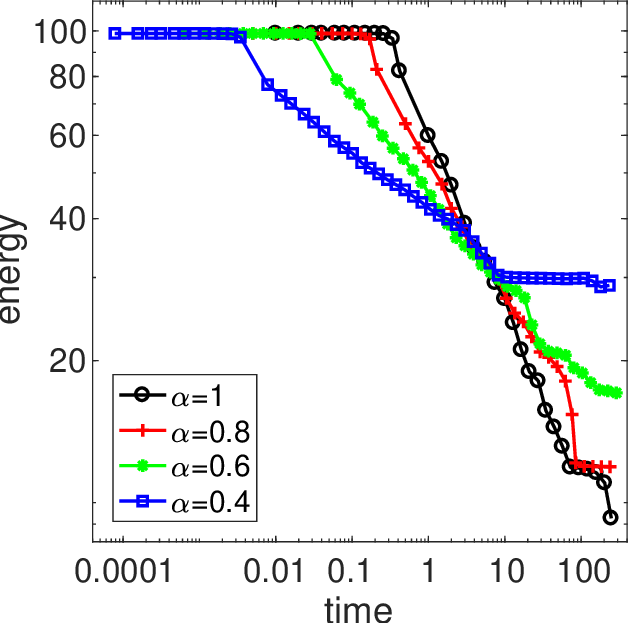}
  }
  \subfigure[The power-law scaling]{
    \label{fig:6FMBEpowerlaw}
    \includegraphics[width=0.45\textwidth]{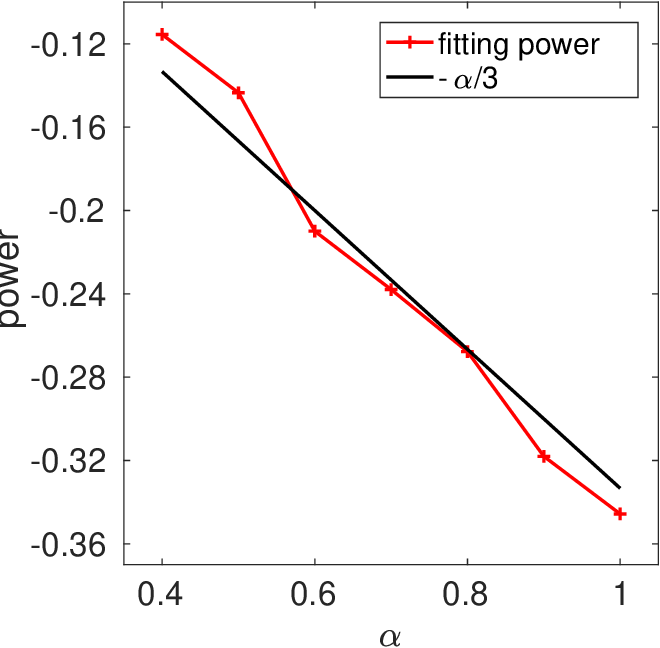}
  }
  \caption{The energy dissipation and power-law scaling for
    time-fractional MBE model with slop selection at several values of
    $\alpha$. }
\end{figure}

For the time-fractional MBE model with slope selection, we take
$L_x=L_y=2\pi, \veps=0.1,$ and $\gamma=\veps$. A uniformly random
distribution field {in $[-0.001,0.001]$} is chosen as the initial
state. We use the stabilized scheme \eqref{eq:TFMBEBDF1} with $S=0.1$,
and take $256\times256$ Fourier modes for spatial discretization. The
time step size is taken as $\tau = 0.001$.

The time evolution for the phase field function $\phi$ with
$\alpha = 0.4, 0.7, 1$ is presented in Fig \ref{fig:5FMBEsolu}, which
demonstrates again that as $\alpha$ decreases the relaxation time
reaching the equilibrium increases. It is seen from
Fig. \ref{fig:6FMBEediss} that the overall energy dissipation process
for the time-fractional MBE model consists of three stages, and in the
second stage a power-law with an asymptotic power of $-\alpha/3$ is
observed, which is confirmed numerically by
Fig. \ref{fig:6FMBEpowerlaw}.

\smallskip

\section{Concluding remarks}

In this work, we established an energy dissipation theory for the
time-fractional phase field equations. We prove in the continuous level
that the time-fractional phase field equations admit an energy
dissipation law of integral type.  In the discrete level, we propose a
class of finite difference schemes that can inherit the discrete energy
dissipation property. These numerical schemes are applied and analyzed
for the time-fractional AC equation, the time-fractional CH equation,
and the time-fractional MBE model. Several numerical experiments are
carried out to verify the theoretical predictions.  In particular, it is
observed numerically that the energy dissipation rate satisfies a power
law with an asymptotic power $-\alpha/3$, where $\alpha$ is the
fractional parameter.

We conclude this work by pointing out several relevant issues which
require further study.

\begin{itemize}
\item As discussed in Remark 2.1, we have presented the energy law
  $E[\phi(T)]\le E[\phi(0)]$. An open question is to verify the
  following energy dissipation law:
  \begin{align*}
    \frac{d}{dt} E\le 0  \quad  or  \quad  \frac{d^\alpha}{dt^\alpha} E\le 0.
  \end{align*}

\item We have shown numerically that the energy dissipation rate
  satisfies a power law with an asymptotic power $-\alpha/3$ in the
  coarsening stage for the time-fractional CH equation and the MBE
  model. However, a rigorous theoretical justification is needed.

\item On the numerical side, only first order schemes are investigated
  in this work.  It will be more useful and more challenging to design
  high-order energy stable schemes in our future studies.

\end{itemize}

\section*{Acknowledgment}

We thank the anonymous referees for their valuable comments and
suggestions which helped us to improve the manuscript a lot. We also
would like to thank Prof. Jie Shen, Prof. Yifa Tang, Dr. Jiwei Zhang,
Dr. Zhi Zhou and Dr. Honglin Liao for helpful discussions.  The
computations were partly done on the high performance computers of State
Key Laboratory of Scientific and Engineering Computing, Chinese Academy
of Sciences.

\bibliographystyle{plain}
\bibliography{fractional}

\begin{thebibliography}{10}

\bibitem{fractional_Mao}
M.~Ainsworth and Z.~Mao.
\newblock Analysis and approximation of a fractional {C}ahn-{H}illiard
  equation.
\newblock {\em SIAM J. Numer. Anal.}, 55(4):1689--1718, 2017.

\bibitem{fractional_III}
M.~Ainsworth and Z.~Mao.
\newblock Well-posedness of the {C}ahn-{H}illiard equation with fractional free
  energy and its {F}ourier {G}alerkin approximation.
\newblock {\em Chaos Soliton. Fract.}, 102:264--273, 2017.

\bibitem{fractional_I}
G.~Akagi, G.~Schimperna, and A.~Segatti.
\newblock Fractional {C}ahn-{H}illiard, {A}llen-{C}ahn and porous medium
  equations.
\newblock {\em J. Differ. Equations}, 261:2953--2985, 2016.

\bibitem{Allen.etal2016}
M.~Allen, L.~Caffarelli, and A.~Vasseur.
\newblock A parabolic problem with a fractional time derivative.
\newblock {\em Arch Rational Mech Anal}, 221(2):603--630, 2016.

\bibitem{AllenCahn_1979}
S.M. Allen and J.W. Cahn.
\newblock A microscopic theory for antiphase boundary motion and its
  application to antiphase domain coarsening.
\newblock {\em Acta Metall}, 27:1085--1095, 1979.

\bibitem{BoschKS_CiCP2018}
J.~Bosch, C.~Kahle, and M.~Stoll.
\newblock Preconditioning of a coupled {C}ahn-{H}illiard {N}avier-{S}tokes
  system.
\newblock {\em Commun. Comput. Phys}, 23:603--628, 2018.

\bibitem{Brunner2004}
Hermann Brunner.
\newblock {\em Collocation Methods for {{Volterra}} Integral and Related
  Functional Differential Equations}, volume~15.
\newblock {Cambridge University Press}, 2004.

\bibitem{caffarelli_l_1995}
L.~A. Caffarelli and N.~E. Muler.
\newblock An ${L^\infty}$ bound for solutions of the {Cahn}-{Hilliard}
  equation.
\newblock {\em Arch. Rational Mech. Anal.}, 133(2):129--144, December 1995.

\bibitem{CH}
J.~W. Cahn and J.~E. Hilliard.
\newblock Free energy of a nonuniform system {I}: {I}nterfacial free energy.
\newblock {\em J. Chem. Phys}, 28(2):258--267, 1958.

\bibitem{MBE}
S.~Clarke and D.~D. Vvedensky.
\newblock Origin of reflection high-energy electron-diffraction inten- sity
  oscilations during molecular-beam epitaxy: A computational modeling approach.
\newblock {\em Phys.Rev. Lett.}, 58:2235--2238, 1987.

\bibitem{condette_spectral_2011}
N.~Condette, C.~Melcher, and E.~{S\"uli}.
\newblock Spectral approximation of pattern-forming nonlinear evolution
  equations with double-well potentials of quadratic growth.
\newblock {\em Math. Comp.}, 80(273):205--223, 2011.

\bibitem{Diegel.etal2017}
A.~E. Diegel, C.~Wang, X.~Wang, and S~. Wise.
\newblock Convergence analysis and error estimates for a second order accurate
  finite element method for the {{Cahn}}-{{Hilliard}}-{{Navier}}-{{Stokes}}
  system.
\newblock {\em Numer. Math.}, 137(3):495--534, 2017.

\bibitem{du_phase_2004}
Q.~Du, C.~Liu, and X.~Wang.
\newblock A phase field approach in the numerical study of the elastic bending
  energy for vesicle membranes.
\newblock {\em J. Comput. Phys.}, 198(2):450--468, 2004.

\bibitem{du_numerical_1991}
Q.~Du and Roy~A. Nicolaides.
\newblock Numerical analysis of a continuum model of phase transition.
\newblock {\em SIAM J. Numer. Anal.}, 28(5):1310--1322, 1991.

\bibitem{Du}
Q.~Du, J.~Yang, and Z.~Zhou.
\newblock Time-fractional {A}llen-{C}ahn equations: Analysis and numerical
  methods.
\newblock {\em arXiv:1906.06584}, 2019.

\bibitem{elliott_global_1993}
C.~M. Elliott and A.~M. Stuart.
\newblock The global dynamics of discrete semilinear parabolic equations.
\newblock {\em SIAM J. Numer. Anal.}, 30:1622--1663, 1993.

\bibitem{Eyre_1998}
D.~J. Eyre.
\newblock Unconditionally gradient stable time marching the {Cahn}-{Hilliard}
  equation.
\newblock In {\em Computational and Mathematical Models of Microstructural
  Evolution ({San} {Francisco}, {CA}, 1998)}, volume 529 of {\em Mater. {Res}.
  {Soc}. {Sympos}. {Proc}.}, pages 39--46. MRS, 1998.

\bibitem{Gomez_JCP_2011}
H.~Gomez and T.~Hughes.
\newblock Provably unconditionally stable, second-order time-accurate, mixed
  variational methods for phase-field models.
\newblock {\em J. Comput. Phys.}, 230:5310--5327, 2011.

\bibitem{Guo.etal2016}
J.~Guo, C.~Wang, S.~Wise, and X.~Yue.
\newblock An ${H}^2$ convergence of a second-order convex-splitting, finite
  difference scheme for the three-dimensional {{Cahn}}-{{Hilliard}} equation.
\newblock {\em Commun. Math. Sci.}, 14(2):489--515, 2016.

\bibitem{Hawkins_tumor_2012}
A.~Hawkins-Daarud, K.~G. Van~Der Zee, and J.~T. Oden.
\newblock Numerical simulation of a thermodynamically consistent four-species
  tumor growth model.
\newblock {\em Int. J. Numer. Methods Biomed Eng.}, 8:3--24, 2012.

\bibitem{jiang_fast_2017}
S.~Jiang, J.~Zhang, Q.~Zhang, and Z.~Zhang.
\newblock Fast evaluation of the {C}aputo fractional derivative and its
  applications to fractional diffusion equations.
\newblock {\em Commun. Comput. Phys.}, 21(3):650--678, 2017.

\bibitem{Jin.etal2013}
B.~Jin, R.~Lazarov, and Z.~Zhou.
\newblock Error estimates for a semidiscrete finite element method for
  fractional order parabolic equations.
\newblock {\em SIAM J. Numer. Anal.}, 51(1):445--466, 2013.

\bibitem{jin_numerical_2018}
B.~Jin, B.~Li, and Z.~Zhou.
\newblock Numerical analysis of nonlinear subdiffusion equations.
\newblock {\em SIAM J. Numer. Anal.}, 56(1):1--23, 2018.

\bibitem{kilbas_theory_2006}
A.~Kilbas, H.~Srivastava, and J.~Trujillo.
\newblock {\em Theory and Applications of Fractional Differential Equations}.
\newblock North-Holland Mathematics Studies, 2006.

\bibitem{Le.etal2016}
K.~Le, W.~McLean, and K.~Mustapha.
\newblock Numerical solution of the time-fractional {{Fokker}}-{{Planck}}
  equation with general forcing.
\newblock {\em SIAM J. Numer. Anal.}, 54(3):1763--1784, 2016.

\bibitem{Li_Liu_2003}
B.~Li and J.-G. Liu.
\newblock Thin film epitaxy with or without slope selection.
\newblock {\em Eur. J. Appl. Math.}, 14:713--743, 2003.

\bibitem{Li_tumor_2007}
X.~Li, V.~Cristini, Q.~Nie, and J.S. Lowengrub.
\newblock Nonlinear three-dimensional simulation of solid tumor growth.
\newblock {\em Discrete Contin. Dyn. Syst., Ser. B}, 7:581--604, 2007.

\bibitem{fractional_Wang_I}
Z.~Li, H.~Wang, and D.~Yang.
\newblock A space-time fractional phase-field model with tunable sharpness and
  decay behavior and its efficient numerical simulation.
\newblock {\em J. Comput. Phys.}, 347:20--38, 2017.

\bibitem{liao_discrete_2019}
H.~Liao, W.~McLean, and J.~Zhang.
\newblock A discrete {Gr\"onwall} inequality with applications to numerical
  schemes for subdiffusion problems.
\newblock {\em SIAM J. Numer. Anal.}, 57(1):218--237, 2019.

\bibitem{lin_finite_2007}
Y.~Lin and C.~Xu.
\newblock Finite difference/spectral approximations for the time-fractional
  diffusion equation.
\newblock {\em J. Comput. Phys.}, 225(2):1533--1552, 2007.

\bibitem{LowengrubT_1998}
J.~Lowengrub and L.~Truskinovsky.
\newblock Quasi-incompressible {C}ahn-{H}illiard fluids and topological
  transitions.
\newblock {\em P. Roy. Soc. Lond. A. Math. Phy.}, 454:2617--2654, 1998.

\bibitem{Ma_etal_CiCP2017}
L.~Ma, R.~Chen, X.~Yang, and H.~Zhang.
\newblock Numerical approximations for {A}llen-{C}ahn type phase field model of
  two-phase incompressible fluids with moving contact lines.
\newblock {\em Commun. Comput. Phys.}, 27:867--889, 2017.

\bibitem{Anderson_1998}
D.~M. Anderson G.~B. McFadden and A.~A. Wheeler.
\newblock Diffuse-interface methods in fluid mechanics.
\newblock {\em Annu. Rev. Fluid Mech.}, 30:139--165, 1998.

\bibitem{mclean_convergence_2009}
W.~McLean and K.~Mustapha.
\newblock Convergence analysis of a discontinuous {Galerkin} method for a
  sub-diffusion equation.
\newblock {\em Numer. Algor.}, 52(1):69--88, 2009.

\bibitem{Mustapha.Schotzau2014}
K.~Mustapha and D.~Sch\"otzau.
\newblock Well-posedness of hp-version discontinuous {{Galerkin}} methods for
  fractional diffusion wave equations.
\newblock {\em IMA J. Numer. Anal.}, 34(4):1426--1446, 2014.

\bibitem{fractional_II}
Y.~Nec, A.A. Nepomnyashchy, and A.A. Golovin.
\newblock Front-type solutions of fractional {A}llen-{C}ahn equation.
\newblock {\em Physica D}, 237:3237--3251, 2008.

\bibitem{Nohel.Shea1976}
J.~A Nohel and D.~F Shea.
\newblock Frequency domain methods for {{Volterra}} equations.
\newblock {\em Adv. Math.}, 22(3):278--304, 1976.

\bibitem{podlubny_fractional_1998}
Igor Podlubny.
\newblock {\em Fractional Differential Equations}.
\newblock Elsevier, 1998.

\bibitem{qian_molecular_2003}
T.~Qian, X.~P. Wang, and P.~Sheng.
\newblock Molecular scale contact line hydrodynamics of immiscible flows.
\newblock {\em Phy. Rev. E}, 68(1):016306, 2003.

\bibitem{Shen.etal2016c}
J.~Shen, T.~Tang, and J.~Yang.
\newblock On the maximum principle preserving schemes for the generalized
  {{Allen}}-{{Cahn}} equation.
\newblock {\em Commun. Math. Sci.}, 14(6):1517--1534, 2016.

\bibitem{ShenY_DCDS_2010}
J.~Shen and X.~Yang.
\newblock Numerical approximations of {A}llen-{C}ahn and {C}ahn-{H}illiard
  equations.
\newblock {\em Discret. Contin. Dyn. Syst.}, 28:1669--1691, 2010.

\bibitem{shen_efficient_2015}
J.~Shen, X.~Yang, and H.~Yu.
\newblock Efficient energy stable numerical schemes for a phase field moving
  contact line model.
\newblock {\em J. Comput. Phys.}, 284:617--630, 2015.

\bibitem{fractional_Xu}
F.~Song, C.~Xu, and G.~Em Karniadakis.
\newblock A fractional phase-field model for two-phase flows with tunable
  sharpness: Algorithms and simulations.
\newblock {\em Comput. Methods Appl. Mech. Engrg}, 305:376--404, 2016.

\bibitem{stynes_error_2017}
M.~Stynes, E.~O\'Riordan, and J.~Gracia.
\newblock Error analysis of a finite difference method on graded meshes for a
  time-fractional diffusion equation.
\newblock {\em SIAM J. Numer. Anal.}, 55(2):1057--1079, 2017.

\bibitem{sun_fully_2006}
Z.~Sun and X.~Wu.
\newblock A fully discrete difference scheme for a diffusion-wave system.
\newblock {\em Appl. Numer. Math.}, 56(2):193--209, 2006.

\bibitem{Tang1993b}
T.~Tang.
\newblock A finite difference scheme for partial integro-differential equations
  with a weakly singular kernel.
\newblock {\em Appl. Numer. Math.}, 11(4):309--319, 1993.

\bibitem{wang_energy_2018}
L.~Wang and H.~Yu.
\newblock Energy stable second order linear schemes for the {Allen-Cahn}
  phase-field equation.
\newblock {\em Commun. Math. Sci.}, 17(3):609--635, 2018.

\bibitem{wang_efficient_2018}
L.~Wang and H.~Yu.
\newblock On efficient second order stabilized semi-implicit schemes for the
  {Cahn-Hilliard} phase field equation.
\newblock {\em J. Sci. Comput.}, 77:1185--1209, 2018.

\bibitem{Wise_tumor_2008}
S.~M. Wise, J.~S. Lowengrub, H.~B. Frieboes, and V.~Cristini.
\newblock Three-dimensional multispecies nonlinear tumor growth {I}: {Model}
  and numerical method.
\newblock {\em J. Theor. Biol.}, 253(3):524--543, 2008.

\bibitem{XuT_SINUM_2006}
C.~Xu and T.~Tang.
\newblock Stability analysis of large time-stepping methods for epitaxial
  growth models.
\newblock {\em {SIAM} J. {N}umer. {A}nal.}, 44:1759--1779, 2006.

\bibitem{xu_sharp-interface_2018}
X.~Xu, Y.~Di, and H.~Yu.
\newblock Sharp-interface limits of a phase-field model with a generalized
  {Navier} slip boundary condition for moving contact lines.
\newblock {\em J. Fluid Mech.}, 849:805--833, 2018.

\bibitem{Yan_CiCP_2018}
Y.~Yan, W.~Chen, C.~Wang, and S.~M. Wise.
\newblock A second-order energy stable {BDF} numerical scheme for the
  {C}ahn-{H}illiard equation.
\newblock {\em Commun. Comput. Phys}, 23:572--602, 2018.

\bibitem{YuJZ_CiCP2010}
H.~Yu, G.~Ji, and P.~Zhang.
\newblock A nonhomogeneous kinetic model of liquid crystal polymers and its
  thermodynamic closure approximation.
\newblock {\em Commun. Comput. Phys.}, 7:383--402, 2010.

\bibitem{fractional_Wang_II}
J.~Zhao, L~Chen, and H.~Wang.
\newblock On power law scaling dynamics for time-fractional phase field models
  during coarsening.
\newblock {\em Commun. Nonlinear Sci.}, 70:257--270, 2019.

\end{thebibliography}

\end{document}